\theoremstyle{plain}
\newtheorem{theorem}{Theorem}[section]
\newtheorem{corollary}[theorem]{Corollary}
\newtheorem{conjecture}[theorem]{Conjecture}
\newtheorem{lemma}[theorem]{Lemma}
\theoremstyle{definition}
\theoremstyle{remark}
\numberwithin{equation}{section}
\begin{document}

\title{On the orthogonal symmetry of $L$-functions of a family of Hecke Gr\"{o}ssencharacters}

\abstract 
The family of symmetric powers of an $L$-function associated with an
elliptic curve with complex multiplication has received much attention from algebraic, automorphic and p-adic points of view.
Here we examine one explicit such family from the perspectives of classical analytic number theory and random matrix theory, 
especially focusing on evidence for the  symmetry type of the family.
In particular,
we investigate the values at the central point and give
evidence that this family can be modeled by ensembles of
orthogonal matrices.  We prove an asymptotic formula with power savings for the average of these L-values, 
which reproduces, by a completely different method,  an asymptotic formula proven by Greenberg and Villegas--Zagier. We give an upper bound 
for the second moment which is conjecturally too large by just one logarithm.  We also give an explicit conjecture for the second
moment of this family, with power savings. Finally, we compute the one level density for this family with a test function whose Fourier transform 
has limited support. It is known by the work of Villegas -- Zagier  that the subset of these  L-functions from our family which have even functional
equations never vanish; we show to what extent this result is reflected by our analytic results. 
 
\endabstract

\author {J.B. Conrey}
\address{American Institute of Mathematics,
360 Portage Ave, Palo Alto, CA 94306 USA and School of
Mathematics, University of Bristol, Bristol, BS8 1TW, United
Kingdom} \email{conrey@aimath.org}

\author{N.C. Snaith}
\address{School of Mathematics,
University of Bristol, Bristol, BS8 1TW, United Kingdom}
\email{N.C.Snaith@bris.ac.uk}

\subjclass[2010]{Primary 11F67; Secondary 15A52, 11R42}

\keywords{central values of Hecke L-series, orthogonal symmetry, non-vanishing, one-level density}

\thanks{
Research of the first author supported by the American Institute
of Mathematics and by a grant from the National Science
Foundation. The second author was partially supported by an EPSRC
Advanced Research Fellowship and subsequently sponsored by the Air
Force Office of Scientific Research, Air
  Force Material Command, USAF, under grant number FA8655-10-1-3088.} \maketitle

\tableofcontents

\section{Introduction}
\label{sec:intro}

$L$-functions are fundamental objects in number theory  that carry
a lot of arithmetic information.  Probably the most famous example
is the Birch and Swinnerton-Dyer conjecture that equates the rank
of an elliptic curve with the order of vanishing of its
$L$-function at the central point.  It is generally believed that
the vanishing of an $L$-function at its central point indicates
some arithmetic-geometric structure.  There are many theorems
concerning the first-order vanishing of elliptic curve
$L$-functions  and random matrix theory has been used to model the
frequency
 of second-order vanishing \cite{kn:ckrs00}.  In addition,
 the Langlands philosophy predicts that
for any $L$-function arising from an automorphic representation
there is a new $L$-function associated with the $r$th symmetric
power representation. Combining these ideas, Barry Mazur asked the
following question: Given the $L$-function of an elliptic curve
$E/\mathbb Q$, is it true that the central value of the
$L$-function of its $n$th symmetric power vanishes, if ever, for
at most finitely many values of $n$? He admitted that it would
likely be too difficult to answer this question, but further asked
if random matrix theory could provide a model for this question.

We investigated this interesting question and quickly agreed that
it was much too difficult to answer. At this stage, we cannot even
decide whether, generically,  the collection of $\{L(\mbox{sym}^n
E,s)\}$ constitutes a family in the sense of
\cite{kn:katzsarnak99a} or \cite{kn:cfkrs}. But we did address his
question in the interesting special case that $E$ is an elliptic curve with
complex multiplication. In this situation, $L(\mbox{sym}^n E,s)$
is no longer a primitive $L$-function, i.e. it factors into other
$L$-functions. Basically $ L(\mbox{sym}^n E,s)$ is a product of
$L$-functions associated with cusp forms of increasing weight, see  the discussion at equation (\ref{eq:factoring}).  If
one of these $L$-functions happens to vanish at its central point,
then, since that $L$-function will show up in the factorizations
of infinitely many of the $L(\mbox{sym}^n E,s)$ we have a somewhat
trivial answer to Mazur's question.  A better question in this
case is whether infinitely often the new primitive part of each of
$L(\mbox{sym}^n E,s)$ can vanish at its central point. 
This is a question that has received attention from an automorphic and p-adic perspective, notably in works of 
Gross-Zagier \cite{kn:grozag80}, Villegas - Zagier \cite{kn:vilzag93} and  Greenberg \cite{kn:gre83}. 
In particular it is known in some instances that when the functional equation has even 
type there is never any vanishing. This is far better than what one could hope for by analytic methods,
where the best one could possibly  achieve would be non-vanishing in 100\% of the cases. 
Nevertheless, in view of the success by algebraic methods, it is interesting to compare what happens 
with a  classical approach.

With this classical method we have had some partial success. 
In particular, the primitive
parts alluded to above do seem to form an orthogonal family, and we can model this
family using random matrix theory. We can also take some theoretical steps  and in
particular can prove an asymptotic formula, with power savings,  for the first moment of
the $L$-functions in this family. This improves an asymptotic formula with no error 
term proven by Greenberg \cite{kn:gre83} and Villegas--Zagier \cite{kn:vilzag93}.  We can also give an upper bound that is
probably too large by only one logarithm for the second moment of
the $L$-functions in this family.  We conclude, by Cauchy's
inequality, that at least $N/(\log^2N)$  of the first $N$
$L$-functions in this family do not vanish at their central point.
 Moreover, if we assume that the Riemann Hypothesis holds for this family,
then we can compute the one-level density for this family with a restricted class of test functions, from which it follows
 that at least $1/4$ of the $L$-functions in this
family do not vanish at their central point.

The family of $L$-functions we consider are associated with a
sequence of Hecke Gr{\"o}ssencharacters. To make things concrete,
we deal with one specific case, the Gr{\"o}ssencharacters
associated with the field $Q(\sqrt{-7})$, following the paper by
Gross and Zagier \cite{kn:grozag80}.  Many families of
$L$-functions have been studied with a view to determining whether
they show the unitary, orthogonal or symplectic symmetry type of
random matrix theory, as proposed by Katz and Sarnak
\cite{kn:katzsarnak99a}. We investigate the moments at the central
point and the one-level density of the zeros of the Hecke
$L$-functions and find that these agree with the hypothesis that
the symmetry type of this family is {\it orthogonal}.

Some of this research was carried out during the MSRI program {\it Arithmetic Statistics} in 2011. The authors would like to thank 
MSRI for the hospitable working environment. The authors also thank the referee for very helpful comments 
and  for bringing to the attention of the authors some of the literature about this problem that they had 
missed in an earlier version of the paper.

\subsection{Background to Hecke $L$-functions}

The book of Iwaniec and Kowalski \cite{kn:iwakow}, Section 3.8, is
a good reference for the material in this section, as is \cite{kn:grozag80}, from which much of this material is taken.  See also \cite{kn:vilzag93}.  It should be noted that we use the analytic normalisation which places the central point at 1/2.   The integers
of ${\mathbb Q}(\sqrt{-7})$ are all numbers of the form $a+b\eta $
where $a$ and $b$ are integers and
\begin{equation}\eta=\frac{1+\sqrt{-7}}{2}.
\end{equation} The norm of $a+b\eta $ is
\begin{equation}
N(a+b\eta )=(a+b\eta )(a+b\overline{\eta})=a^2+ab
+2b^2.
\end{equation}
The field ${\mathbb Q}(\sqrt{-7})$ has class number 1 so that the
ideals are generated by the integers $a+b\eta$. The only units are
$+1$ and $-1$, so that each ideal has two generators.

The Dedekind zeta-function of the field $K={\mathbb Q}(\sqrt{-7})$
is
\begin{equation}
\zeta_K(s)=\frac 12 \sum_{(a,b)\ne
(0,0)}\frac{1}{N(a+b\eta )^s}=\zeta(s)L(s,\chi_{-7})
\end{equation}
where $\chi_{-7}(n)=\left(\frac n 7\right)$ is  the Legendre
symbol.

Note that
\begin{equation}\frac 12 \sum_{(a,b)\ne
(0,0)}q^{a^2+ab+2b^2}=1+\sum_{n=1}^\infty a_n q^n=1+q+2 q^2+3
q^4+q^7+4 q^8+2 q^{11}+2 q^{14}+\dots
\end{equation}
where
\begin{equation}\sum_{n=1}^\infty
\frac{a_n}{n^s}=\zeta_K(s)=(1+\frac{1}{2^s}+\frac{1}{3^s}+\dots)(1+\frac{1}{2^s}-\frac{1}{3^s}+
\frac{1}{4^s}-\frac{1}{5^s}-\frac{1}{6^s}+\frac{1}{8^s}+\dots).
\end{equation}

The Hecke character we are interested in is defined by
\begin{equation}
\chi((a+b\eta ))=\epsilon_{a,b} (a+b\eta )
\end{equation}
provided that $a+b\eta $ is relatively prime to
$\sqrt{-7}$  (otherwise the value of $\chi$ is 0). Here the choice
of $\epsilon=\pm 1$ is determined by
\begin{equation}
(a+b\eta )^3 \equiv \epsilon_{a,b}  \bmod \sqrt{-7}.
\end{equation}
  This amounts
to whether $a^3-2 a^2 b-a b^2+b^3$ is congruent to $\pm 1$ modulo
7. Thus,
\begin{equation}\epsilon_{a,b}=\left(\frac {a^3-2 a^2 b-a
b^2+b^3}{7}\right) .
\end{equation}

The Hecke $L$-function is
\begin{equation}L(s,\chi)=\frac 12
\sum_{(a,b)\ne (0,0)}\frac{\chi((a+b\eta ))}{(a^2+a b+2
b^2)^{s+1/2}}
\end{equation}
 which can be more simply written as
\begin{equation}L(s,\chi)=\frac 12 \sum_{(a,b)\ne (0,0)}\frac{ (a+b
\eta )\left(\frac{a^3-2 a^2 b-a b^2+b^3}7\right)}{(a^2 +
a b +2 b^2)^{s+1/2}}.
\end{equation}

This is the $L$-function of a cusp form of level 49 and weight $2$
and is the $L$-function of the elliptic curve $y^2+xy=x^3-x^2-2
x-1$, a rank 0 CM
 elliptic curve of conductor 49. The $L$ function $L_E(s)=L(s,\chi)$
satisfies the functional equation
\begin{equation}
\left(\frac{7}{2\pi}\right)^s\Gamma(s+1/2)L(s,\chi)=\Phi(s)=\Phi(1-s).
\end{equation}

We are interested in the primitive parts of the $L$-functions of
the symmetric powers of $L(s,\chi)$.  This amounts to looking at a
sequence of Hecke Gr{\"o}ssencharacters, denoted by
$\chi^{2n-1}$, $n=1,2,\ldots$. The series for $L(s,\chi^{2n-1})$
is
\begin{equation}L(s,\chi^{2n-1})=\frac 12
\sum_{(a,b)\ne (0,0)}\frac{ (a+b
\eta )^{2n-1}\left(\frac{a^3-2 a^2 b-a b^2+b^3}
7\right)}{(a^2 + a b +2 b^2)^{s+n-1/2}}.
\end{equation}
(Note that $L(s,\chi^{2n})$ is identically zero.) The Euler product for  $L(s, \chi^{2n-1})$ is 
\begin{equation}\label{eq:factoring} L(s, \chi^{2n-1})=\prod_{p=a^2+ab+2 b^2}\left(1-\frac{\epsilon_{a,b}(a+b \eta)^{2n-1}}{p^{s+n+1/2}}\right)^{-1}
\left(1-\frac{\epsilon_{a,b}(a+b \overline{\eta})^{2n-1}}{p^{s+n+1/2}}\right)^{-1}.
\end{equation}
In general, if 
\begin{equation}L(s)=\prod_p\left(1-\frac{\alpha_p}{p^s}\right)^{-1}\left(1-\frac{\overline{\alpha_p}}{p^s}\right)^{-1}\end{equation}
with $|\alpha_p|=1$, then the symmetric $k$th power is (up to some bad factors)
\begin{equation}L(s,\operatorname{sym}^k)=\prod_p\left(1-\frac{\alpha_p^k}{p^s}\right)^{-1}\left(1-\frac{\alpha_p^{k-2}}{p^s}\right)^{-1}
\dots  \left(1-\frac{\overline{\alpha_p^{k-2}}}{p^s}\right)^{-1}\left(1-\frac{\overline{\alpha_p^{k}}}{p^s}\right)^{-1}.\end{equation}
Thus we see in our situation for the symmetric powers of the L-function of a CM elliptic curve that  
\begin{equation}L(s,\chi,\operatorname{sym}^{2n-1})=L(s,\chi^{2n-1}) L(s,\chi^{2n-3})L(s,\chi^{2n-5})\dots L(s,\chi).\end{equation}
See \cite{kn:ragsha08} section 4 for more explicit details.

 It is convenient to define the function $\chi^{(2n-1)}$ at
positive rational integers $m$ by
\begin{equation}\chi^{(2n-1)}(m)
= \frac 1 2 \sum_{a^2+ab +2b^2=m}
\chi^{2n-1}((a+b\eta)).\end{equation} Then
\begin{equation}L(s,\chi^{2n-1})=\sum_{m=1}^\infty \frac{\chi^{(2n-1)}(m)}{m^{s+n-1/2}}
\end{equation}
 The functional
equation for $L(s,\chi^{2n-1})$ is
\begin{equation}
\left(\frac{7}{2\pi}\right)^s\Gamma(s+n-1/2)L(s,\chi^{2n-1})=
\Phi_{2n-1}(s)=(-1)^{n-1}\Phi_{2n-1}(1-s)
\end{equation}
and in asymmetric form
 \begin{equation}
 L(s,\chi^{2n-1})=(-1)^{n-1}X_{2n-1}(s)L(1-s,\chi^{2n-1})
\end{equation}
where
\begin{equation}X_{2n-1}(s)=\left(\frac{7}{2\pi}\right)^{1-2s}\frac{\Gamma(1-s+n-1/2)}{\Gamma(s+n-1/2)}.
\end{equation}
 Here the center of the
critical strip is at $s=1/2$. 

Using  Hecke's standard method, if $L(s)=\sum_{m=1}^\infty a_m m^{-s}$ is entire then the functional equation
\begin{equation} Q^s \Gamma(s+a) L(s)=Q^{1-s}\Gamma(1-s+a) L(1-s)\end{equation}
is equivalent (via Mellin transforms) to 
\begin{equation}f(y):= \sum_{m=1}^\infty m^a a_m e^{-my/Q} =y^{-2a-1}f(1/y).\end{equation}
Therefore,
\begin{eqnarray}
Q^s \Gamma(s+a)L(s)=Q^{-a}\int_0^\infty f(y) y^{s+a}\frac{dy}{y}&=&Q^{-a}\int_1^\infty \left(f(y) y^{s+a} +f(1/y) y^{-s-a}\right)\frac{dy}{y}
\end{eqnarray}
whence
\begin{eqnarray}
L(1/2)&=&\frac{2Q^{-a-1/2}}{\Gamma(1/2+a)} \int_1^\infty f(y) y^{a+1/2} \frac{dy}{y}\\
&=& \frac{2}{\Gamma(1/2+a)}\sum_{m=1}^\infty \frac{a_m}{m^{1/2}} \int_{m/Q}^\infty e^{-y} y^{a+1/2} \frac{dy}{y}.\nonumber
\end{eqnarray}
We apply this formula with $a=n-1/2$ and $Q=\frac{7}{2\pi}$ and use the formula for the incomplete Gamma function:
\begin{equation}
\Gamma(b,z)=\int_z^\infty y^{b} e^{-y}~\frac{dy}{y}.
\end{equation}
 In this way, if $n$
is odd, we obtain
\begin{equation}L(1/2,\chi^{2 n-1})=\frac{2 }{(n-1)!}
\sum_{m=1}^\infty \frac{\chi^{(2n-1)}(m)}{m^n}\Gamma(n,\frac{2\pi
m}{7}) .
\end{equation}

\section{Moments of the $L$-function at the central point}

One way to test the symmetry type of a family of $L$-functions is
to compute average values of the $L$-functions evaluated at the
central point.  In families displaying orthogonal symmetry the
average of the  $k$th power of the central value of the
$L$-function grows like the $k(k-1)/2$ power of the asymptotic
variable, where as for unitary symmetry the growth is like the
$k^2$ power and symplectic symmetry shows $k(k+1)/2$ power growth.
In the case of our family of Hecke characters, the asymptotic
parameter is $\log N$, so to agree with the predictions of
orthogonal symmetry we expect the first moment to be
asymptotically constant and the second moment to grow like $\log
N$.

\subsection{The first moment}

We wish to compute
\begin{eqnarray}
\mathcal M_r(N):= \frac{1}{N}\sum_{n=1 }^N   L(1/2,\chi^{4n-3})^r
\end{eqnarray}
asymptotically when $r=1$ and to give an upper bound when $r=2$.

We note that Greenberg (see \cite{kn:gre83}, page 258) states that such  an asymptotic formula for this first moment
(with no explicit error term) would follow from a formula of his, provided it were known that  $L(1/2,\chi^{4n-3})\ge 0$. 
Villegas and Zagier \cite{kn:vilzag93} prove this non-negativity. It is instructive to give a direct treatment from 
first principles. In addition we have an explicit error term. 

\begin{theorem}\label{theo:mom1}
As $N\to \infty$, we have
\begin{eqnarray}
\mathcal M_1(N)= \frac{1}{N}\sum_{n=1 }^N
L(1/2,\chi^{4n-3})=\frac{2\pi }{\sqrt{7}}+O\left(\frac{\log
N}{\sqrt{N}}\right).
\end{eqnarray}
\end{theorem}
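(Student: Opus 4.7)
The plan is to apply the explicit formula at the end of Section 1 with $n$ replaced by $2n-1$, giving, for every $n\ge 1$,
\[
L(1/2,\chi^{4n-3}) \;=\; \sum_{\substack{z=a+b\eta\ne 0\\ (\sqrt{-7})\,\nmid\,z}}\frac{\epsilon_z\,u_z^{4n-3}}{|z|}\,A_n(|z|), \qquad A_n(r):=\frac{\Gamma(2n-1,\,2\pi r^2/7)}{(2n-2)!},
\]
where $u_z:=z/|z|$ and we have used $z^{4n-3}/|z|^{4n-2}=u_z^{4n-3}/|z|$. I then sum over $1\le n\le N$ and interchange the two sums; the swap is valid because $A_n(r)$ decays super-exponentially as soon as $r^2$ exceeds $7n/\pi$, so the tail in $z$ is harmless. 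The outer sum splits naturally into a real part ($b=0$) and a non-real part ($b\ne 0$).

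For the real piece, $z=\pm a$ with $a\ge 1$, $7\nmid a$, and the signs $\epsilon_{-a}=-(a/7)$ and $(-1)^{4n-3}=-1$ make $+a$ and $-a$ contribute identically, so their combined weight is $2(a/7)/a\cdot\sum_{n=1}^N A_n(a)$. Via the Poisson-duality identity $A_n(a)=\Pr(N_P\le 2n-2)$ with $N_P\sim\mathrm{Poisson}(2\pi a^2/7)$, the partial sum equals $N-\pi a^2/7+O(1)$ for $a^2\le 7N/\pi$ and is super-exponentially small otherwise. The leading piece yields
\[
\frac{2}{N}\cdot N\sum_{a\ge 1,\,7\nmid a}\frac{(a/7)}{a} \;=\; 2L(1,\chi_{-7}) \;=\; \frac{2\pi}{\sqrt 7}
\]
by the Dirichlet class number formula; the subtraction $-\pi a^2/7$, the $O(1)$ transition error, and the tail past $a^2=7N/\pi$ each contribute $O(N^{-1/2})$ after partial summation using the bounded character sums $\sum_{a\le X}(a/7)=O(1)$.

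For the off-diagonal part $b\ne 0$ one has $u_z^4\ne 1$. Since $A_n$ is monotone in $[0,1]$ (total variation at most $1$) and the partial sums of $u_z^{4n-3}$ are bounded by $2/|1-u_z^4|=1/|\sin 2\theta_z|$, Abel summation gives $|\sum_{n=1}^N u_z^{4n-3}A_n(|z|)|\ll 1/|\sin 2\theta_z|$. Writing $z=x+iy$ with $x=a+b/2$ and $y=b\sqrt{7}/2$, the identity $|\sin 2\theta_z|=2|xy|/|z|^2$ produces the key elementary inequality
\[
\frac{1}{|z|\,|\sin 2\theta_z|} \;=\; \frac{|z|}{2|xy|} \;\le\; \frac{1}{2|x|}+\frac{1}{2|y|},
\]
which is the heart of the error bound. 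Each of the two resulting one-dimensional lattice sums is $O(\sqrt N\log N)$: for instance, over non-real $z$ with $|z|^2\ll N$, $\sum 1/|y|\ll\sum_{0<|b|\ll\sqrt N}|b|^{-1}\cdot\#\{a:a^2+ab+2b^2\ll N\}\ll\sqrt N\log N$. So the off-diagonal contribution to $\sum_{n\le N}L(1/2,\chi^{4n-3})$ is $O(\sqrt N\log N)$, giving $O(\log N/\sqrt N)$ in $\mathcal M_1(N)$ and dominating the $O(N^{-1/2})$ from the real part. The main obstacle is precisely this off-diagonal bound: the cruder estimate $1/|\sin 2\theta_z|\ll|z|$ yields only $O(\log^2 N/\sqrt N)$, losing the advertised logarithm, so exploiting the decomposition $|z|/|xy|\le 1/|x|+1/|y|$ to separate variables is essential.
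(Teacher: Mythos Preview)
Your off-diagonal treatment is essentially identical to the paper's: both execute Abel summation on the weighted geometric series in $n$, use the identity $|\sin 2\theta_z|=2|xy|/|z|^2$ (which the paper writes as $\sin 4\pi\theta_{a,b}=(a+b/2)b\sqrt 7/(a^2+ab+2b^2)$), and then separate variables via $|z|/|xy|\ll 1/|x|+1/|y|$ to reach $O(\sqrt N\log N)$. One small point: your Abel bound as stated, $\ll 1/|\sin 2\theta_z|$, carries no decay in $|z|$, so to justify restricting the lattice sum to $|z|^2\ll N$ you should record the sharper form $\ll A_N(|z|)/|\sin 2\theta_z|$ (valid since $A_n$ is increasing in $n$); the paper achieves the same truncation via the crude estimate $\Gamma(2n-1,x)/\Gamma(2n-1)\ll e^{-x/n}$ at $n=N$.

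Your diagonal treatment, by contrast, is genuinely different and more streamlined than the paper's. The paper appeals to Tricomi's asymptotic for the incomplete gamma function near its transition argument and then splits the $(n,a)$-sum into four ranges according to the size of $a^2$ relative to $2n-1$, which is somewhat laborious. Your Poisson identity $A_n(a)=\Pr(N_P\le 2n-2)$ lets you sum in $n$ first and reduce everything to partial summation against the bounded partial sums of $\chi_{-7}$. There is, however, a small error: the claim $\sum_{n\le N}A_n(a)=N-\pi a^2/7+O(1)$ for all $a^2\le 7N/\pi$ fails near the boundary. Writing $\lambda=2\pi a^2/7$, one has $\sum_{n\le N}A_n(a)=E[\max(0,N-\lceil N_P/2\rceil)]$, and at $\lambda=2N$ this is $\sim\sqrt{N/(4\pi)}$ while $N-\pi a^2/7=0$, so the discrepancy is of order $\sqrt N$, not $O(1)$. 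This does not damage the final estimate, since the transition zone has width $O(1)$ in $a$ (because $d(a^2)/da\asymp\sqrt N$ there), each such $a$ has $1/a\asymp N^{-1/2}$, and $\sum_n A_n\ll\sqrt N$ throughout, giving a net contribution $O(N^{-1})$ to $\mathcal M_1(N)$; but you should isolate and bound this range explicitly rather than absorb it into a false uniform $O(1)$.
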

\begin{proof}
We have
\begin{eqnarray}
\mathcal M_1(N)=\frac{2}{N}\sum_{m=1}^\infty
\frac{1}{\sqrt{m}}\sum_{n=1}^N \frac{\Gamma(2n-1,\frac{2\pi m}{7})
}{\Gamma(2n-1)} \frac{\chi^{(4n-3)}(m)}{m^{2n-3/2}}.
\end{eqnarray}
Now
\begin{eqnarray}
\chi^{(4n-3)}(m)= \frac{1}{2}\sum_{a^2+ab+2b^2=m}
\left(\frac{a^3-2a^2b-ab^2+b^3}{7}\right)
(a+b\eta )^{4n-3}
\end{eqnarray}
  so that
\begin{eqnarray}
\mathcal M_1(N)=\frac{1}{N}\sum_{(a,b)\ne (0,0)}
\frac{\left(\frac{a^3-2a^2b-ab^2+b^3}{7}\right)}{\sqrt{a^2+ab+2b^2}}\sum_{n=1}^N
\frac{\Gamma(2n-1,\frac{2\pi (a^2+ab+2b^2)}{7}) }{\Gamma(2n-1)}
 \delta_{a,b}^{4n-3}
\end{eqnarray}
where
\begin{eqnarray}
\delta_{a,b}=\frac{a+b\eta }{\sqrt{a^2+ab+2b^2}}=e(\theta_{a,b}),
\end{eqnarray}
say, where $e(x)=\exp(2\pi i x)$.  Now suppose that $f(x)$ is a positive, smooth,  increasing
function on $[0,\infty)$. Then for any real number $\theta$,
\begin{eqnarray}
\sum_{n=1}^N f(n) e ( n \theta)=\int_{1^-} ^ N f(u) d \Sigma(u)
=f(N)\Sigma(N) -\int_1^N f'(u)\Sigma(u)~du
\end{eqnarray}
where
\begin{eqnarray} \Sigma(u)=\sum_{n\le u} e(n \theta)=\frac{e^{2\pi i (1+[u])\theta}-e^{2\pi i \theta}}{1-e^{2\pi i \theta}}
=\frac{e^{2\pi i (1/2+[u])\theta}-e^{\pi i \theta}}{e^{-\pi i
\theta}-e^{\pi i \theta}}.
\end{eqnarray}
It follows that
\begin{eqnarray}
|\Sigma(u)|\le \frac{1}{|\sin \pi \theta|} .
\end{eqnarray}
Thus,
\begin{eqnarray}
\sum_{n=1}^N f(n) e ( n \theta)\ll  \frac{|f(N)|}{|\sin \pi
\theta|}.
\end{eqnarray}
Now
\begin{eqnarray}
f(n)= \frac{\Gamma(2n-1,\frac{2\pi (a^2+ab+2b^2)}{7})
}{\Gamma(2n-1)}
\end{eqnarray}
does satisfy the properties described above, and
\begin{eqnarray}
 \frac{\Gamma(2n-1,x )}{\Gamma(2n-1)}\ll e^{-x/n}.
 \end{eqnarray}
 Thus,
 \begin{eqnarray}  &&
\mathcal M_1(N)=\frac{1}{N}\sum_{(a,b)\ne (0,0)\atop
{4\theta_{a,b}\in \mathbb Z}}
\frac{\left(\frac{a^3-2a^2b-ab^2+b^3}{7}\right)}{\sqrt{a^2+ab+2b^2}}\sum_{n=1}^N
\frac{\Gamma(2n-1,\frac{2\pi (a^2+ab+2b^2)}{7}) }{\Gamma(2n-1)}
 \delta_{a,b}^{4n-3} \\
 &&\qquad \qquad \qquad +O\left(\frac{1}{N}\sum_{ 4\theta_{a,b}\notin \mathbb Z}
 \frac{e^{-(a^2+ab+2b^2)/N}}{\sqrt{a^2+ab +2b^2}} \frac{1}{|\sin 4 \pi \theta_{a,b}|}\right)
\end{eqnarray}
Now
\begin{eqnarray}
\delta_{a,b}=\cos 2\pi \theta_{a,b}+i\sin
2\pi\theta_{a,b}=\frac{a+b\eta }{\sqrt{a^2+ab+2b^2}}=e(\theta_{a,b})
\end{eqnarray}
so that
\begin{eqnarray}
\sin 4\pi \theta_{a,b}=2\sin 2\pi \theta_{a,b}\cos 2\pi
\theta_{a,b}=\frac{(a+b/2)b\sqrt{7}}{a^2+ab +2b^2}.
\end{eqnarray}
If $4\theta_{a,b}\notin \mathbb Z$ then
\begin{eqnarray}\label{eqn:sin}
\frac{1}{|\sin 4\pi \theta_{a,b}|} \ll \frac{a^2+ab+2
b^2}{|a+b/2||b|}\ll \sqrt{a^2+ab+2b^2}\max\{1/|a+b/2|,1/|b|\}
\end{eqnarray}
  since $a^2+ab +2b^2=(a+b/2)^2+7b^2/4.$  Thus, the $O$-term above is
  \begin{eqnarray}&
  \ll& \frac{1}{N}\sum_{ {b\ne 0}\atop{ 2a\ne -b}}
 e^{-(a^2+ab+2b^2)/N} \max\left\{\frac 1{|a+b/2|},\frac 1{|b|}\right\}\nonumber\\
 &\ll& \frac{1}{N}\sum_{ {b\ne 0}\atop{ 2a\ne -b}}
 e^{-(a^2+ab+2b^2)/N} \left(\frac 1{|a+b/2|}+\frac 1{|b|}\right)\nonumber\\
 &\ll& N^{-1/2}\log N.
\end{eqnarray}

If $4\theta_{a,b}\in \mathbb Z$ then either $b=0$ or $a=-b/2$ and
$\delta_{a,b}^4=1.$ In the situation that $a=-b/2$ we have
$\delta_{a,b}=a\sqrt{-7}$ is not coprime with $\sqrt{-7}$. So
these terms do not contribute anything. Thus, we have
 \begin{eqnarray}
\mathcal M_1(N)&=&\frac{1}{N}\sum_{{b=0}\atop {a\ne 0}}
\frac{\left(\frac{a^3-2a^2b-ab^2+b^3}{7}\right)}{\sqrt{a^2+ab+2b^2}}\delta_{a,b}^{-3}\sum_{n=1}^N
\frac{\Gamma(2n-1,\frac{2\pi (a^2+ab+2b^2)}{7})
}{\Gamma(2n-1)}\\&&\qquad \nonumber +O(N^{-1/2}\log N) .
\end{eqnarray}

\begin{lemma} (Tricomi \cite{kn:tricomi50})\label{lem:tricomi}
Suppose that $b>0$ and $n>0$. Let
\begin{eqnarray}
\gamma(n,b)= \int_0^b e^{-x}x^n\frac{dx}{x} .
\end{eqnarray}
Then, as $n\to \infty$,
\begin{eqnarray}
\frac{\gamma(n+1,n-y\sqrt{2n})}{\Gamma(n+1)}
=\frac{1}{2}\operatorname{erfc}(y)-\frac{\sqrt{2}}{3\sqrt{\pi n}}
(1+y^2)e^{-y^2} +O\left(\frac 1 n\right).
 \end{eqnarray}
 Here $y$ is any real number and
 \begin{eqnarray}
 \operatorname{erfc}(y)=\frac{2}{\sqrt{\pi}}\int_y^\infty e^{-t^2} ~dt .
 \end{eqnarray}
 For $y>0$,
 \begin{eqnarray}
 \operatorname{erfc}(y)= \frac{e^{-y^2}}{\sqrt{\pi}y}\left(1-\frac{1}{2y^2}+\frac{3}{4y^4}+\dots \right)
 \end{eqnarray}
 while
 \begin{eqnarray}
 \operatorname{erfc}(-y)=1-\operatorname{erfc}(y).
 \end{eqnarray}

 \end{lemma}

 We have, recalling $\left(\frac{-1}{7}\right)=-1$ and
 $\delta_{a,0}=a/|a|$,
 \begin{eqnarray}
\mathcal M_1(N)&=&\frac{2}{N}\sum_{a=1}^\infty \frac{\left(\frac a
7\right)}{a}\sum_{n=1}^N  \frac{\Gamma(2n-1,\frac{2\pi a^2 }{7})
}{\Gamma(2n-1)} +O(N^{-1/2}\log N).
\end{eqnarray}
We split the sum into four pieces:
$(\Sigma_1+\Sigma_2+\Sigma_3+\Sigma_4)/N$ where
\begin{eqnarray}
\Sigma_1= 2\sum_{n=1}^N \sum_{a^2\le
C_1n-C_2\sqrt{n}}\frac{\left(\frac a 7\right)}{a}
\frac{\Gamma(2n-1,\frac{2\pi a^2 }{7}) }{\Gamma(2n-1)}
\end{eqnarray}
\begin{eqnarray}
\Sigma_2= 2\sum_{n=1}^N \sum_{C_1n-C_2\sqrt{n}<a^2\le
C_1 n+C_2\sqrt{n}}\frac{\left(\frac a 7\right)}{a}
\frac{\Gamma(2n-1,\frac{2\pi a^2 }{7}) }{\Gamma(2n-1)}
\end{eqnarray}
\begin{eqnarray}
\Sigma_3= 2\sum_{n=1}^N \sum_{C_1n+C_2\sqrt{n}<a^2\le
2n}\frac{\left(\frac a 7\right)}{a}  \frac{\Gamma(2n-1,\frac{2\pi
a^2 }{7}) }{\Gamma(2n-1)}
\end{eqnarray}
and
\begin{eqnarray}
\Sigma_4= 2\sum_{n=1}^N \sum_{ a^2> 2n}\frac{\left(\frac a
7\right)}{a}  \frac{\Gamma(2n-1,\frac{2\pi a^2 }{7})
}{\Gamma(2n-1)};
\end{eqnarray}
here $C_1=\frac{7}{\pi}$ and $C_2$ is a large constant. $C_1$ is chosen so that the two arguments in the incomplete gamma function are approximately equal; see Lemma \ref{lem:tricomi} which describes this transition range. We have
\begin{eqnarray}
\Sigma_1=2\sum_{n=1}^N \sum_{a^2\le
C_1n-C_2\sqrt{n}}\frac{\left(\frac a 7\right)}{a} -\Sigma_{1,a}
\end{eqnarray}
where
\begin{eqnarray}
\Sigma_{1,a}= 2\sum_{n=1}^N \sum_{a^2\le
C_1n-C_2\sqrt{n}}\frac{\left(\frac a
7\right)}{a}\frac{\gamma(2n-1,\frac{2\pi a^2 }{7})
}{\Gamma(2n-1)}.
\end{eqnarray}
Now
\begin{eqnarray}
\sum_{a^2\le C_1n-C_2\sqrt{n}}\frac{\left(\frac a
7\right)}{a}=\sum_{a=1}^\infty \frac{\left(\frac a
7\right)}{a}+O\left(\frac{1}{\sqrt{n}}\right)
\end{eqnarray}
so that
\begin{eqnarray}
\Sigma_1=2L(1,\chi_{-7})N+O(\sqrt{N})-\Sigma_{1,a}.
\end{eqnarray}
  Using the Lemma, we have
\begin{eqnarray}
\Sigma_{1,a}\ll \sum_{n=1}^N \sum_{a^2\le C_1 n-C_2\sqrt{n} }
a^{-1}(\operatorname{erfc}(y) +y^2e^{-y^2}n^{-1/2}+O(1/n))
\end{eqnarray}
where $
y=(2n-2-2\pi a^2/7)/\sqrt{4n-4}$.    We illustrate how
to estimate this sum with a simplified version that omits the
constants.
\begin{eqnarray}
\sum_{n=1}^N \sum_{a^2\le n-\sqrt{n}} \frac{1}{a}
\operatorname{erfc}\left(\frac{(n-a^2)}{\sqrt{n}}\right) &\ll&
\sum_{n=1}^N \sum_{a^2\le n-\sqrt{n}} a^{-1} \frac{e^{-\frac{(n-a^2)^2}{n}}}
{\frac{(n-a^2)}{\sqrt{n}}}\nonumber\\
&\ll& \sum_{n=1}^N \int_1^{ \sqrt{n-\sqrt{n}}} u^{-1}
\frac{e^{-\frac{(n-u^2)^2}{n}}}{\frac{(n-u^2)}{\sqrt{n}}}~du\nonumber\\
&=& \sum_{n=1}^N \int_{\sqrt{n}}^{ n-1}  \frac{e^{-\frac{v^2}{n}}}
{\frac{v}{\sqrt{n}}}\frac{dv}{n-v}\nonumber\\
&=& \sum_{n=1}^N \sqrt{n}\int_{1}^{ \sqrt{n}-\frac{1}{\sqrt{n}}}
\frac{e^{-t^2}}{t}\frac{dt}{n-t\sqrt{n}}.
\end{eqnarray}
Now split the integral into $1\le t \le \sqrt{n}/2$ and
$\sqrt{n}/2\le t \le \sqrt{n}-1/\sqrt{n}$ to see that  this sample
sum is $\ll \sum_{n=1}^N \frac{1}{\sqrt{n}}\ll \sqrt{N}$.  We can
treat the part with $y^2e^{-y^2}/\sqrt{n}$ in a similar way and
the $1/n$ part trivially. In this way we have
\begin{eqnarray} \Sigma_{1,a}\ll \sqrt{N}.
\end{eqnarray}
The inner sum over $a$ of $\Sigma_2$ has a bounded number of
terms, each of which is $\ll 1/a \ll 1/\sqrt{n}$. Thus
$\Sigma_2\ll \sqrt{N}$. We can treat $\Sigma_3$ exactly as we did
$\Sigma_{1,a}$. Finally, the estimation of $\Sigma_4$ is like the
estimation of
\begin{eqnarray}
 \sum_{n=1}^N \sum _{a^2>2n} a^{-1}\int_{a^2}^\infty e^{-t}t^{n-1}~dt /\Gamma(n)&\ll& \sum_{n=1}^N \int_{2n}^\infty e^{-t}t^{n-1}\sum_{a\le \sqrt{t}}
 \frac{1}{a} ~dt/\Gamma(n)\nonumber\\
 &\ll & \sum_{n=1}^Ne^{-2n} (2n)^{n-1} (\log 2n)/\Gamma(n)\ll\sum_{n=1}^N e^{-n}\ll1.
 \end{eqnarray}

Thus, we conclude that
\begin{eqnarray}
\mathcal M_1(N)&=&2L(1,\chi_{-7})+O\left(\frac{\log
N}{\sqrt{N}}\right)=\frac{2\pi }{\sqrt{7}}+O\left(\frac{\log
N}{\sqrt{N}}\right).
\end{eqnarray}

\end{proof}

\subsection{The second moment}
We can give an upper bound for the second moment $\mathcal M_2(N).$
\begin{theorem} We have
\begin{equation}
\mathcal M_2(N) \ll  \log^2 N.
\end{equation}
\end{theorem}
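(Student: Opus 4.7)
My plan is to square the starting formula from the proof of Theorem \ref{theo:mom1} and to perform an analogous argument with four variables in place of two. Writing
$$L(1/2,\chi^{4n-3})=\frac{2}{(2n-2)!}\sum_m\frac{\chi^{(4n-3)}(m)}{m^{2n-1}}\,\Gamma\!\left(2n-1,\tfrac{2\pi m}{7}\right),$$
squaring, expanding each $\chi^{(4n-3)}(m_i)$ as a sum over pairs $(a_i,b_i)$ with $a_i^2+a_ib_i+2b_i^2=m_i$, and swapping the orders of summation yields
$$\mathcal M_2(N)=\frac{1}{N}\sum_{(a_1,b_1),(a_2,b_2)}\frac{\epsilon_1\epsilon_2(\delta_1\delta_2)^{-3}}{\sqrt{m_1m_2}}\sum_{n=1}^N(\delta_1\delta_2)^{4n}\,g_n(m_1,m_2),$$
where $g_n(m_1,m_2)=\Gamma(2n-1,\tfrac{2\pi m_1}{7})\Gamma(2n-1,\tfrac{2\pi m_2}{7})/\Gamma(2n-1)^2$ is positive, monotone increasing in $n$ (as a product of two such factors), bounded by $1$, and satisfies $g_n\ll e^{-c(m_1+m_2)/n}$. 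Partial summation exactly as in Theorem \ref{theo:mom1} then gives $\bigl|\sum_{n\le N}(\delta_1\delta_2)^{4n}g_n\bigr|\ll g_N/|\sin 4\pi\phi|$ whenever $4\phi=4(\theta_1+\theta_2)\notin\mathbb Z$, and I would split the quadruple sum accordingly.

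On the diagonal ($4\phi\in\mathbb Z$), $\delta_1\delta_2$ must be a fourth root of unity in $K=\mathbb Q(\sqrt{-7})$. Since $\pm i\notin K$ and since $\gcd(a_i+b_i\eta,\sqrt{-7})=1$ forces $7\nmid m_1m_2$, one must have $\delta_1\delta_2=\pm 1$, equivalently $a_1b_2+a_2b_1+b_1b_2=0$. Such quadruples are parametrized by taking $(a_1,b_1)=g(a_1',b_1')$ with $(a_1',b_1')$ primitive and $(a_2+b_2\eta)=k\,\overline{(a_1'+b_1'\eta)}$ for $k\in\mathbb Z\setminus\{0\}$, which gives $m_1=g^2m'$, $m_2=k^2m'$, $\sqrt{m_1m_2}=|gk|m'$ where $m'=a_1'^2+a_1'b_1'+2b_1'^2$. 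Using Euler's criterion $g^3\equiv(g/7)\pmod 7$ one finds $\epsilon_1\epsilon_2(\delta_1\delta_2)^{-3}=(|g|/7)(|k|/7)$; bounding the inner $n$-sum trivially by $N$ and using the effective cutoff $\max(g^2,k^2)m'\lesssim N$, the diagonal contribution becomes
$$\ll\sum_{(a_1',b_1')\text{ prim},\,m'\lesssim N}\frac{1}{m'}\,\bigg|\sum_{1\le g\le\sqrt{N/m'}}\frac{(g/7)}{g}\bigg|\,\bigg|\sum_{1\le k\le\sqrt{N/m'}}\frac{(k/7)}{k}\bigg|.$$
Using just one of the two partial-sum bounds $\sum_{k\le K}(k/7)/k\ll 1$ (uniformly in $K$, since $(\cdot/7)$ is a nontrivial Dirichlet character) saves one logarithm, and $\sum_{(a_1',b_1')\text{ prim},\,m'\lesssim N}1/m'\ll\log N$ together with the one surviving inner harmonic sum $\ll\log N$ then yields a diagonal of size $\ll\log^2 N$.

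For the off-diagonal, I would use the four-variable analog of (\ref{eqn:sin}): from
$$\sin 2\pi\phi=\frac{\sqrt 7\,(a_1b_2+a_2b_1+b_1b_2)}{2\sqrt{m_1m_2}},\qquad\cos 2\pi\phi=\frac{2a_1a_2+a_1b_2+a_2b_1-3b_1b_2}{2\sqrt{m_1m_2}},$$
one obtains $|\sin 4\pi\phi|^{-1}\ll m_1m_2/(|\alpha||\beta|)$, where $\alpha,\beta$ are the respective numerators. Combined with the exponential cutoff $g_N\ll e^{-c(m_1+m_2)/N}$, a case split on which of $|\alpha|,|\beta|$ is smaller should also give $O(\log^2 N)$.

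The main obstacle is the off-diagonal estimate: in Theorem \ref{theo:mom1} the factor $|\sin 4\pi\theta_{a,b}|^{-1}$ was controlled by a single product in $(a,b)$, whereas here $|\sin 4\pi\phi|^{-1}$ is a product of two vanishing expressions in four integer variables. Handling near-degenerate quadruples where only one of $\alpha,\beta$ is small, without over-counting, requires a more delicate case analysis extending the single-factor treatment; fortunately the exponential decay in $g_N$ should provide enough convergence in each case to give the claimed bound.
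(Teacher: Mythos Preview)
Your overall strategy coincides with the paper's: square the incomplete-Gamma expression for $L(1/2,\chi^{4n-3})$, expand each factor over lattice points, interchange summations, treat the inner sum over $n$ as a weighted geometric series with ratio $(\delta_{a,b}\delta_{A,B})^4$, and split according to whether this ratio is $1$ or not. Two points of comparison are worth noting.

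For the off-diagonal, the paper makes a simplifying observation you do not use explicitly. Since $(a+b\eta)(A+B\eta)=(aA-2bB)+(Ab+aB+bB)\eta$, one has $\delta_{a,b}\delta_{A,B}=\delta_{c,d}$ with $(c,d)=(aA-2bB,\,Ab+aB+bB)$ and $c^2+cd+2d^2=m_1m_2$. Thus the one-pair formula (\ref{eqn:sin}) applies verbatim and gives
\[
|\sin 4\pi\phi|^{-1}\ \ll\ \sqrt{m_1m_2}\,\max\Bigl\{\frac{1}{|2c+d|},\,\frac{1}{|d|}\Bigr\}.
\]
Your product bound $m_1m_2/(|\alpha||\beta|)$ together with the observation $\max(|\alpha|,|\beta|)\gg\sqrt{m_1m_2}$ (from $\sin^2+\cos^2=1$) reduces to the same thing; your ``case split'' is precisely this. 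What you are missing, and what the paper supplies, is Lemma~\ref{lem:quadraticform}: for any nondegenerate integral quadratic form $Q$ in four variables,
\[
\sum_{\substack{|a|,|b|,|A|,|B|\le X\\ Q(a,b,A,B)\ne 0}}\frac{1}{|Q(a,b,A,B)|}\ \ll\ X^2\log^2 X.
\]
Applied with $X\asymp\sqrt N$ (from the exponential cutoff in $g_N$) to each of $Q=d=Ab+aB+bB$ and $Q=2c+d=2aA+aB+Ab-3bB$, this disposes of the off-diagonal in one stroke. So the ``more delicate case analysis'' you anticipate is really the proof of this quadratic-form lemma, not a further decomposition of the four-variable sum.

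On the diagonal your treatment is actually more refined than the paper's. The paper simply asserts $\sum_{Ab+aB+bB=0,\,|a|,|b|,|A|,|B|\le X}(m_1m_2)^{-1/2}\ll\log^2 X$ after parametrising the solutions, taking absolute values throughout. Your factorisation isolating the Dirichlet-character sums $\sum_{g}(g/7)/g$ and $\sum_k(k/7)/k$ exploits genuine cancellation that the paper's sketch does not invoke; this is a legitimate sharpening and in fact gives some additional saving over the stated $\log^2 N$.
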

Note that this result implies the convexity bound
  \begin{eqnarray}
L(1/2,\chi^{4n-3}\ll n^{1/2}\log n,
\end{eqnarray}
addressing Greenberg's question on the size of these $L$-functions (see \cite{kn:gre83}, page 258).
\begin{proof}
The proof follows exactly along the lines above. The only thing extra that we need is a bound
for summing the inverse of a quadratic form.
\begin{lemma}\label{lem:quadraticform} Let $Q$ be a non-degenerate quadratic form in 4 variables with integer coefficients. Then
\begin{equation}
\sum_{a,b,A,B\le X\atop
Q(a,b,A,B)\ne 0}\frac{1}{|Q(a,b,A,B)|} \ll X^2 \log^2 X.
\end{equation}
\end{lemma}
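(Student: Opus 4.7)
Plan. My strategy is to single out one of the four variables, reduce to a three-variable sum via partial fractions, and then exploit the non-degeneracy of $Q$ to control the resulting sum. After a linear change of coordinates with bounded integer entries (which enlarges the box by at most a constant factor and so does not affect the final bound), I may assume that the coefficient $\alpha$ of $a^{2}$ in $Q$ is a nonzero integer; this is always achievable since any non-degenerate form acquires a nonzero diagonal entry after a substitution such as $x_i\mapsto x_i+x_j$. Writing
$$
Q(a,b,A,B)=\alpha a^{2}+L(b,A,B)\,a+R(b,A,B)
$$
with $L$ linear and $R$ quadratic in $(b,A,B)$ and completing the square in $a$ gives $Q=\alpha(a+L/(2\alpha))^{2}-D/(4\alpha)$, where $D(b,A,B):=L^{2}-4\alpha R$. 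Non-degeneracy of $Q$ in four variables is then equivalent to non-degeneracy of $D$ as a ternary quadratic form.

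For each fixed $(b,A,B)$ I estimate the inner sum over $a$ in two complementary ways. Since $Q(a,b,A,B)\in\mathbb{Z}$ is a quadratic polynomial in $a$ with $|Q|\le CX^{2}$, each nonzero integer value is attained at most twice, so comparison with the harmonic sum gives the uniform bound $\sum_{a\le X,\,Q\ne 0}|Q|^{-1}\ll\log X$. Alternatively, factoring $Q=\alpha(a-a_{0})(a-a_{1})$ with $|a_{0}-a_{1}|=\sqrt{|D|}/|\alpha|$ (real or complex roots) and applying partial fractions yields the sharper bound $\ll\log X/\sqrt{|D(b,A,B)|}$ whenever $|D|\gg 1$. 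Combining these,
$$
\sum_{a\le X,\ Q\ne 0}\frac{1}{|Q(a,b,A,B)|}\ll\frac{\log X}{\max\bigl(1,\sqrt{|D(b,A,B)|}\bigr)}.
$$

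It remains to prove $S:=\sum_{b,A,B\le X}1/\max(1,\sqrt{|D(b,A,B)|})\ll X^{2}\log X$, since multiplying by the $\log X$ from the inner sum then delivers the claimed $X^{2}\log^{2}X$. If $D$ is definite then $|D|\gg b^{2}+A^{2}+B^{2}$ and $S\ll X^{2}$ follows by direct comparison with an integral. If $D$ is indefinite, I would decompose dyadically according to $|D|\sim M$ for $1\le M\ll X^{2}$ and use the lattice-point estimate
$$
\#\{(b,A,B)\in[-X,X]^{3}:|D|\sim M\}\ll XM+X^{2},
$$
coming from the fact that $\{|D|\le M\}$ is a tubular neighbourhood of the cone $D=0$ of volume $O(XM)$; summing $M^{-1/2}$ against this bound over the $O(\log X)$ dyadic scales of $M$ gives $S\ll X^{2}\log X$. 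The principal technical obstacle is this last lattice-point estimate for indefinite ternary forms near the zero cone, where the integrality fallback $\sum_{a}|Q|^{-1}\ll\log X$ is essential for absorbing the degenerate locus on which $D$ vanishes.
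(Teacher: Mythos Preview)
The paper does not actually prove this lemma: immediately after stating it and the illustrative example $Q=(a-b)(A-B)$, it says ``We leave the proof as an interesting exercise for the reader.'' So there is no argument in the paper to compare yours against.

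Your outline has a genuine gap in the inner-sum estimate. The claimed bound
\[
\sum_{|a|\le X,\ Q\ne 0}\frac{1}{|Q(a,b,A,B)|}\ \ll\ \frac{\log X}{\sqrt{|D(b,A,B)|}}\qquad(|D|\gg 1)
\]
is false when $D>0$ is large and a real root of $Q(\,\cdot\,,b,A,B)$ lies abnormally close to an integer. For a concrete non-degenerate example take $Q=a^{2}-b^{2}-A^{2}-B^{2}$ and any $(b,A,B)$ with $b^{2}+A^{2}+B^{2}=m^{2}+1$ for an integer $m\le X$: then $\sqrt{D}\asymp m$, yet $|Q(m,b,A,B)|=1$, so the single term $a=m$ already contributes $1\gg(\log X)/\sqrt{D}$. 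In general, partial fractions only yield
\[
\sum_a\frac{1}{|Q|}\ \ll\ \frac{1}{\sqrt{D}}\Bigl(\frac{1}{\|a_0\|}+\frac{1}{\|a_1\|}+\log X\Bigr),
\]
and integrality of $Q$ forces merely $\|a_i\|\gg D^{-1/2}$, so an additive $O(1)$ is unavoidable in the real-root case. Summing that $O(1)$ over the $\asymp X^{3}$ triples $(b,A,B)$ with $D>0$ gives $X^{3}$, which swamps the target $X^{2}\log^{2}X$. Closing this gap requires controlling the near-root lattice points---equivalently, bounding $\#\{(a,b,A,B)\in[-X,X]^{4}:Q=m\}$ for each small $m$---and that representation-number estimate is essentially the substance of the lemma, not something your outline supplies. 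Your integrality device (each value taken at most twice, hence $\sum_a|Q|^{-1}\ll\log X$) is genuinely useful on the locus $D=0$, but it does not rescue the large-$D$ real-root regime. As a secondary point, the tubular-volume heuristic $\#\{|D|\sim M\}\ll XM+X^{2}$ also needs more care for indefinite $D$ with a hyperbolic factor (e.g.\ $D=bA+B^{2}$), though the weaker conclusion $S\ll X^{2}\log X$ that you actually use appears to survive.
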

For example, \begin{equation}\sum_{a,b,A,B\le X\atop a\ne b, A\ne
B} \frac{1}{|(a-b)(A-B)|}\ll X^2 \log^2 X.\end{equation} We leave
the proof as an interesting exercise for the reader.

 Here is a sketch of the proof of the theorem.  Take the square of the formula
for the central value and average over $n$:
 \begin{eqnarray}
\mathcal M_2(N)&=&\frac{1}{N}\sum_{(a,b)\ne (0,0)\atop (A,B)\ne (0,0)}
\frac{\left(\frac{a^3-2a^2b-ab^2+b^3}{7}\right)}{\sqrt{a^2+ab+2b^2}}
\frac{\left(\frac{A^3-2A^2B-AB^2+B^3}{7}\right)}{\sqrt{A^2+AB+2B^2}}\nonumber\\
&&\qquad \times \sum_{n=1}^N  \frac{\Gamma(2n-1,\frac{2\pi
(a^2+ab+2b^2)}{7}) }{\Gamma(2n-1)} \frac{\Gamma(2n-1,\frac{2\pi
(A^2+AB+2B^2)}{7}) }{\Gamma(2n-1)}
 (\delta_{a,b}\delta_{A,B})^{4n-3}.
\end{eqnarray}
The inner sum is a geometric series with a smooth weight, as in the proof of Theorem \ref{theo:mom1}.   Now,
with $\eta=(1+\sqrt{-7})/2$,
 \begin{eqnarray}
 \delta_{a,b}\delta_{A,B} &=& \frac{a+b\eta}{|a+b \eta|}
 \frac{A+B \eta}{|A+B\eta|}=\frac{(aA-2bB)+(Ab+aB+bB)\eta}{\sqrt{a^2+ab+2b^2}
 \sqrt{A^2+AB+2B^2}}\nonumber\\
 &=&\delta_{aA-2bB,Ab+aB+bB}=e(\theta_{aA-2bB,Ab+aB+bB}).
\end{eqnarray}
Just as above, we have
\begin{eqnarray}
\frac{1}{|\sin 4\pi \theta_{aA-2bB,Ab+aB+bB}|} \ll  &&
\sqrt{a^2+ab+2b^2}\sqrt{A^2+AB+2B^2}\nonumber
\\&&\quad \times \max\{1/|2aA-4bB+Ab+aB+bB|,1/|Ab+aB+bB|\}.
\end{eqnarray}
Thus, by Lemma \ref{lem:quadraticform}, the terms with  $4\pi
\theta_{aA-2bB,Ab+aB+bB} \notin \mathbb Z$ contribute an amount
which is $\ll N\log^2 N$. If $4\pi \theta_{aA-2bB,Ab+aB+bB} \in
\mathbb Z$, then it must be the case that either $Ab+aB+bB=0$ or
else $2aA-4bB+Ab+aB+bB=0$. As before, in the second case the
coefficient of this term is 0. Thus,  $Ab+aB+bB=0$. If
$(a,b)=1=(A,B)$, then we have $B(a+b)=-Ab$ and $b(A+B)=-aB$ so
that $b\mid B$ and $B\mid b$. If $B=0$, then $b=0$ and vice versa.
If $B=-b$ then $ A=a+b$ and if $B=b\ne 0$, then $A=-a-b$.
 In any of these events we have
 \begin{equation}\sum_{a,b,A,B\le X \atop Ab+aB+bB=0} \frac{1}{\sqrt{a^2+ab+2b^2}
 \sqrt{A^2+AB+2B^2}}\ll \log^2 X.
 \end{equation}
We conclude that, in this diagonal case, the sum over $n$  is $N$
and  the sum over $a,A,b,B$ is $\ll \log^2N$.  Thus, we have shown that  $\mathcal M_2(N)\ll \log^2 N$ as desired.
\end{proof}
\begin{corollary}
For at  least $N /(\log N)^2$ values of $n \le N$ we have  $L(2n-1,\chi^{4n-3})\ne 0.$
\end{corollary}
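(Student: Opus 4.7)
The plan is a direct Cauchy--Schwarz argument combining the two moment estimates just established. I read the statement as $L(1/2,\chi^{4n-3})\ne 0$, the central value being the natural object in view of Theorems~2.1 and~2.2.

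First I would set $S_N=\{\,n\le N : L(1/2,\chi^{4n-3})\ne 0\,\}$ and write $M=|S_N|$. Since every term in the first moment on which the summand is nonzero comes from an index in $S_N$, Cauchy--Schwarz gives
\begin{equation*}
\Bigl(\sum_{n=1}^N L(1/2,\chi^{4n-3})\Bigr)^2
= \Bigl(\sum_{n\in S_N} L(1/2,\chi^{4n-3})\Bigr)^2
\le M\,\sum_{n=1}^N L(1/2,\chi^{4n-3})^2 .
\end{equation*}
Here I am using that these central values are real, which is immediate from the explicit formula for $L(1/2,\chi^{4n-3})$ after the functional equation: the coefficients $\chi^{(4n-3)}(m)$ are real because each pair of conjugate ideals contributes complex conjugate terms to the inner sum.

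Now I invoke Theorem~\ref{theo:mom1}, which gives $\sum_{n\le N} L(1/2,\chi^{4n-3}) = \frac{2\pi}{\sqrt7}N + O(\sqrt N\log N)$, so the left-hand side is $\gg N^{2}$ for $N$ large. Theorem~2.2 gives $\sum_{n\le N}L(1/2,\chi^{4n-3})^2 \ll N\log^{2}N$. Combining these, $M \gg N/\log^{2}N$, which is the claim.

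There is no real obstacle here, since the Cauchy--Schwarz argument is standard once the two moment estimates are in hand; the substance of the corollary lies entirely in Theorems~2.1 and~2.2. I note in passing that the sharper non-negativity result $L(1/2,\chi^{4n-3})\ge 0$ of Villegas--Zagier would make the rearrangement in Cauchy--Schwarz slightly cleaner, but it is not needed: only the first moment being $\gg N$ in absolute value matters.
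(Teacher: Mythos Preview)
Your proof is correct and follows essentially the same Cauchy--Schwarz argument as the paper: both combine the first moment asymptotic (Theorem~\ref{theo:mom1}) with the second moment upper bound to get $M \gg (N\mathcal M_1(N))^2/(N\mathcal M_2(N)) \gg N/\log^2 N$. Your reading of the statement as concerning $L(1/2,\chi^{4n-3})$ is also right; the ``$2n-1$'' in the corollary is a typo for $1/2$.
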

This follows from a standard use of Cauchy's inequality:
\begin{eqnarray}
|\sum_{n=1}^N L(1/2,\chi^{4n-3})|^2 \le \bigg(\sum_{n\le N\atop L(1/2,\chi^{4n-3})\ne 0} 1\bigg)\left(\sum_{n=1}^N |L(1/2,\chi^{4n-3})|^2\right)
\end{eqnarray}
whence \begin{equation} \sum_{n\le N\atop L(1/2,\chi^{4n-3})\ne
0}1\ge \frac{ (N \mathcal M_1(N))^2}{N\mathcal  M_2(N)}\gg \frac{N^2}{N
\log^2 N} \gg \frac{N}{\log^2N}.\end{equation}

\section{Moment conjectures}

In this section we use the moment conjectures described in
\cite{kn:cfkrs} to calculate the first and second moments of our
family of $L$-functions. The first moment agrees with the previous
section and both of the first two moments support the hypothesis
that the family has orthogonal symmetry.

\subsection{The first moment} We want to use the moment conjecture
recipe to find the first moment at the central point of the family
of $L$-functions $L(s,\chi^{4n-3})$ as we vary $n$. We want to
calculate
\begin{equation}
 \frac{1}{N}\sum_{n=1}^{N}
L(1/2+\alpha,\chi^{4n-3}).
\end{equation}

The functional equation for $L(s,\chi^{4n-3})$ looks like
\begin{equation}
\Big(\frac{7}{2\pi}\Big)^{s}\Gamma(s+2n-3/2)L(s,\chi^{4n-3})=\Phi_{4n-3}(s)=\Phi_{4n-3}(1-s),
\end{equation}
defining
\begin{equation}
{X}_{4n-3}(s):=\Big(\frac{7}{2\pi}\Big)^{1-2s}\frac{\Gamma(1-s+2n-3/2)}
{\Gamma(s+2n-3/2)},
\end{equation}
 and so
the main two terms in the approximate functional equation give us
\begin{eqnarray}
&&\frac{1}{N}\sum_{n=1}^{N} L(1/2+\alpha,\chi^{(4n-3)})\nonumber
\\
&&=\frac{1}{N}\sum_{n=1}^N\Bigg(\sum_m
\frac{\chi^{(4n-3)}(m)}{m^{2n-1+\alpha}}+{X}_{4n-3}(1/2+\alpha)\sum_m\frac{\chi^{(4n-3)}(m)}{m^{2n-1-\alpha}
}\Bigg).
\end{eqnarray}

The recipe instructs us to perform the average over $n$ over the
characters and the $X$ factor from the functional equation.  The
quantity we need to understand is
\begin{equation} \label{eqn:delt}
\delta(m)=\Bigg<\frac{\chi^{(4n-3)}(m)}{m^{2n-3/2}}\Bigg>=\lim_{N\rightarrow
\infty}\frac{1}{N}\sum_{n=1}^N
\frac{\chi^{(4n-3)}(m)}{m^{2n-3/2}}.
\end{equation}
We claim that $\delta$ is multiplicative.
To prove this claim it suffices to prove that
\begin{eqnarray*}
\lim_{N\to \infty}\frac{1}{N}\sum_{n=1}^N  \left(\frac{(a+b\eta)(c+d\eta)}{|a+b\eta||c+d\eta|}\right)^{4n}=
\lim_{N\to \infty}\frac{1}{N}\sum_{n=1}^N  \left(\frac{(a+b\eta) }{|a+b\eta| }\right)^{4n}
\lim_{N\to \infty}\frac{1}{N}\sum_{n=1}^N  \left(\frac{ (c+d\eta)}{ |c+d\eta|}\right)^{4n}
\end{eqnarray*}
whenever $(N(a+b\eta),N(c+d\eta))=1$. 
The only time that  
\begin{eqnarray*}
\lim_{N\to \infty}\frac{1}{N}\sum_{n=1}^N  \left(\frac{ (A+B\eta)}{ |A+B\eta|}\right)^{4n}
\end{eqnarray*}
is not zero is when 
\begin{equation}
 \left(\frac{ (A+B\eta)}{ |A+B\eta|}\right)^{4}=1
\end{equation}
or, equivalently, when $(A+B\eta)^4$ is real and positive, i.e. $A+B \eta$ is either real or purely imaginary, which translates to 
either $B=0$ or $A=-B/2$. 
Thus, under the assumption that  $(N(a+b\eta),N(c+d\eta))=1$, it suffices to prove  the assertion that  
$(a+b\eta) (c+d\eta)$ is either real or purely imaginary  if and only if each of  $(a+b\eta)$ and $(c+d\eta)$ is either real or purely imaginary.
On direction is clear. In the other direction, we
 consider two cases. In the first case suppose that
 $(a+b\eta)(c+d\eta)=A+B\eta$ with $B=0$.  Then 
\begin{equation} \label{eqn:gcd}
a'd+c'b=0
\end{equation}
 where $a'=2a+b$ and $c'=2c+d$ (note that $a',b,c',d$ are all integers).
Write $a'=gA'$ and $b=gB$ with $g=(a',b)$. Then equation (\ref{eqn:gcd}) becomes
$A'd+c'B=0$. Since $(A',B)=1$ it must be the case that $B\mid d$ and $A'\mid c'$, say $d=tB$ and $c'=-tA'$. Substituting back we have
\begin{eqnarray}
a=\frac{g(A'-B)}{2}\qquad b=gB\qquad c= \frac{-t(A'+B)}{2}\qquad d=Bt.
\end{eqnarray}
Then
\begin{eqnarray*}
N(a+b\eta)=a^2+ab+2b^2=(A'^2+7 B^2)\frac{g^2}{4} \qquad N(c+d\eta)=c^2+cd +2 d^2=(A'^2+7 B^2)\frac{t^2}{4}.
\end{eqnarray*}
If $B\ne 0$ then clearly $(N(a+b\eta),N(c+d\eta))>1$. Therefore $B=0$ which implies that $b=d=0$.
 Similar sorts of arguments work in the case that $(a+b\eta)(c+d\eta)=A+B\eta$ with $A=-B/2$. 
Thus, $\delta$ is multiplicative. Now we need to evaluate it at prime power arguments.
 It is not hard to show that $\delta$  vanishes at non-square arguments and that
\begin{equation}
\delta(p^2) = \left\{ \begin{array}{ccc} 0& {\rm if}& p=7\\ +1&
{\rm if} &(\tfrac{p}{7})=1\\-1&{\rm
if}&(\tfrac{p}{7})=-1\end{array}\right.
\end{equation}
and that
\begin{equation}
\delta(p^{2k})=\delta(p^2)^k.
\end{equation}
That is,
\begin{equation}
\delta(m)=\left\{ \begin{array}{ccc} 0& {\rm if}&m\neq
\square\\(\tfrac{\sqrt{m}}{7})& {\rm
if}&m=\square\end{array}\right..
\end{equation}
So, by the recipe we have
\begin{eqnarray}
&&\frac{1}{N}\sum_{n=1}^{N}
L(1/2+\alpha,\chi^{4n-3})\nonumber\\&&
\approx\frac{1}{N}\sum_{n=1}^N
\Bigg(\sum_m\frac{\delta(m)}{m^{1/2+\alpha}}+\Big<{X}_{4n-3}(1/2+\alpha)\Big>\sum_m
\frac{\delta(m)}{m^{1/2-\alpha}}\Bigg)\nonumber \\
&& =\frac{1}{N}\sum_{n=1}^N
\Bigg(\sum_m\frac{\delta(m^2)}{m^{1+2\alpha}}
+\Big<{X}_{4n-3}(1/2+\alpha)\Big>
\sum_m \frac{\delta(m^2)}{m^{1-2\alpha}}\Bigg)\nonumber \\
&& = \frac{1}{N}\sum_{n=1}^N \Bigg(\sum_m
\frac{(\tfrac{m}{7})}{m^{1+2\alpha}} +\Big<{X}_{4n-3}(1/2+\alpha)\Big>
\sum_m\frac{(\tfrac{m}{7})}{m^{1-2\alpha}}\Bigg)
\end{eqnarray}
So, the moment conjecture in this case would be
\begin{eqnarray}
&&\frac{1}{N}\sum_{n=1}^{N} L(1/2+\alpha,\chi^{4n-3})
=L(1+2\alpha,
\chi_{-7})\nonumber \\
&&\qquad\qquad+\Big(\frac{7}{2\pi}\Big)^{-2\alpha}
\frac{1}{N}\sum_{n=1}^N
\frac{\Gamma(2n-1-\alpha)}{\Gamma(2n-1+\alpha)}\;L(1-2\alpha,\chi_{-7})+O(N^{-1/2+\epsilon}).
\end{eqnarray}
  When
$\alpha=0$,
\begin{equation}
\frac{1}{N}\sum_{n=1}^NL(\tfrac{1}{2},\chi^{4n-3})=2L(1,\chi_{-7})
+O(N^{-1/2+\epsilon}).
\end{equation}
  Thus we have
  \begin{conjecture} Using the moment conjecture recipe from
  \cite{kn:cfkrs}
\begin{equation}
\frac{1}{N}\sum_{n=1}^NL(\tfrac{1}{2},\chi^{4n-3})=\frac{2\pi}{\sqrt{7}}
+O(N^{-1/2+\epsilon}).
\end{equation}
\end{conjecture}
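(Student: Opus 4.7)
The plan is to follow the CFKRS recipe that has been set up in the preceding discussion and then specialize the two-variable identity to $\alpha=0$, converting $L(1,\chi_{-7})$ to an explicit constant via Dirichlet's class number formula. All the heavy lifting—multiplicativity of $\delta(m)$, its values on prime powers, and the identification with $\chi_{-7}$—is already done, so the work is mostly bookkeeping.

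First I would start from the approximate functional equation
$$L(1/2+\alpha,\chi^{4n-3}) \approx \sum_m \frac{\chi^{(4n-3)}(m)}{m^{2n-1+\alpha}} + X_{4n-3}(1/2+\alpha) \sum_m \frac{\chi^{(4n-3)}(m)}{m^{2n-1-\alpha}},$$
with a smooth truncation at height $\asymp n^{1/2}$, average over $n\le N$, and apply the recipe by replacing the character averages with $\delta(m)$. Using the identifications proved in the preceding paragraphs (namely $\delta(m)=0$ unless $m$ is a square, $\delta(m^2)=\chi_{-7}(m)$, and $\delta(7^{2k})=0$), one obtains
$$\frac{1}{N}\sum_{n=1}^N L(1/2+\alpha,\chi^{4n-3}) = L(1+2\alpha,\chi_{-7}) + \Bigl(\frac{7}{2\pi}\Bigr)^{-2\alpha}\!\Bigl\langle X_{4n-3}(1/2+\alpha)\Bigr\rangle L(1-2\alpha,\chi_{-7}) + O(N^{-1/2+\epsilon}).$$
Second, I would set $\alpha=0$: the $X$-factor average collapses to $1$ since $\Gamma(2n-1)/\Gamma(2n-1)=1$ and $(7/2\pi)^0=1$, so both main terms collapse to $L(1,\chi_{-7})$ and add to give $2L(1,\chi_{-7})$. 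Finally, invoking Dirichlet's class number formula for $\mathbb{Q}(\sqrt{-7})$ (discriminant $-7$, class number $h=1$, $w=2$ units),
$$L(1,\chi_{-7}) = \frac{2\pi h}{w\sqrt{7}} = \frac{\pi}{\sqrt{7}},$$
gives $2L(1,\chi_{-7}) = 2\pi/\sqrt{7}$, which is the asserted leading constant.

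The only genuinely analytic issue—and the main obstacle to making the recipe into a rigorous argument—is the control of the error term, which requires bounding the off-diagonal phase sums $\sum_{n\le N}\delta_{a,b}^{4n-3}f(n)$ for smooth weights $f$, together with the truncation error in the approximate functional equation. These are precisely the oscillation estimates carried out via partial summation and \eqref{eqn:sin} in the proof of Theorem \ref{theo:mom1}, which establishes the even sharper bound $O(\log N/\sqrt{N})$. In other words, the conjecture is an immediate consequence of Theorem \ref{theo:mom1} combined with the class number computation above; stating it separately here serves only to verify that the CFKRS recipe reproduces the correct leading constant, thereby giving confidence in the higher-moment predictions to follow.
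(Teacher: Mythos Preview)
Your derivation is correct and follows essentially the same route as the paper: apply the CFKRS recipe to the approximate functional equation, replace the character averages by $\delta(m)$, identify the resulting series as $L(1+2\alpha,\chi_{-7})$ and its reflected copy, set $\alpha=0$, and evaluate $L(1,\chi_{-7})=\pi/\sqrt{7}$. Your additional remark that the conjecture is in fact a consequence of Theorem~\ref{theo:mom1} (with the sharper error $O(N^{-1/2}\log N)$) is a valid and welcome observation that the paper leaves implicit.
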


\subsection{The second moment}   Now we calculate the second moment
using the moment conjecture:
\begin{eqnarray}
&&\sum_{n=1}^N
L(\tfrac{1}{2}+\alpha,\chi^{4n-3})L(\tfrac{1}{2}
+\beta,\chi^{4n-3}) \nonumber \\
&&\approx \sum_{n=1}^N\Bigg(\sum_{\ell}
\frac{\chi^{(4n-3)}(\ell)}{\ell^{2n-1+\alpha}}
+X_{4n-3}(1/2+\alpha)\sum_{\ell}\frac{\chi^{(4n-3)}(\ell)}
{\ell^{2n-1-\alpha}}\Bigg) \nonumber\\  \nonumber &&\qquad \qquad
\qquad \times \Bigg(\sum_{m}
\frac{\chi^{(4n-3)}(m)}{m^{2n-1+\beta}}
+{X}_{4n-3}(1/2+\beta)\sum_{m}\frac{\chi^{(4n-3)}(m)}
{m^{2n-1-\beta}}\Bigg)\nonumber \\
&&\approx \sum_{n=1}^N \Bigg(\sum_{\ell,m} \frac{\delta(\ell,m)}
{\ell^{1/2+\alpha}m^{1/2+\beta}}+\Big<{X}_{4n-3}(1/2+\alpha)\Big> \sum_{\ell,
m}\frac{\delta(\ell,m)} {\ell^{1/2-\alpha}m^{1/2+\beta}}\nonumber\\
&& \qquad\qquad \qquad + \Big<{X}_{4n-3}(1/2+\beta)\Big>
\sum_{\ell, m}\frac{\delta(\ell,m)}
{\ell^{1/2+\alpha}m^{1/2-\beta}}\\
 && \qquad\qquad \qquad+\Big<{X}_{4n-3}(1/2+\alpha){X}_{4n-3}(1/2+\beta)\Big>
\sum_{\ell,m}\frac{\delta(\ell,m)}{\ell^{1/2-\alpha}m^{1/2-\beta}}\Bigg).\nonumber
\end{eqnarray}
Here we define
\begin{equation}
\delta(\ell,m)=\lim_{N\rightarrow \infty}\frac{1}{N}\sum_{n=1}^{N}
\frac{\chi^{(4n-3)}(\ell)}{\ell^{2n-3/2}}\frac{\chi^{(4n-3)}(m)}{m^{2n-3/2}}.
\end{equation}
  We find that $\delta$ is multiplicative.  That is,
if $(\ell_1\ell_2, m_1 m_2)=1$, then
\begin{equation}\label{eqn:deltamultiplicative}
\delta(\ell_1 m_1,\ell_2 m_2)=
\delta(\ell_1,\ell_2) \delta(m_1,m_2).
\end{equation}
We leave the proof to the reader; basically it is an elaboration of the proof for the one-variable $\delta$ given immediately after
(\ref{eqn:delt}). 
The behaviour of $\delta$ is summarised as (for $a<b$)
\begin{equation}
\delta(p^a,p^b)=\left\{ \begin{array}{ccc} 0& {\rm if} & a+b {\rm
\; odd}, p=1,2,4 \mod 7\\ a+1 & {\rm if} & a+b {\rm \; even },
p=1,2,4 \mod 7\\ 0& {\rm if} & a {\rm \;or\;} b {\rm
\;odd}, p=3,5,6 \mod 7\\(-1)^{(a+b)/2} & {\rm if} & a {\rm
\;and \;} b {\rm \; even}, p=3,5,6 \mod
7\end{array}\right.
\end{equation}
The practical use of being multiplicative is that we can write the sum over $\delta$ as an Euler product:
\begin{equation}\label{eq:euler}
\sum_{\ell,m}\frac{\delta(\ell,m)}{\ell^{1/2+\alpha}m^{1/2+\beta}}
= \prod_p \sum_{a,b}
\frac{\delta(p^a,p^b)}{p^{(1/2+\alpha)a+(1/2+\beta)b}}.
\end{equation}
 For primes
$p=1,2,4\mod 7$,
\begin{eqnarray}\label{eq:primes1}
&&\sum_{a,b=0}^{\infty} \delta(p^a,p^b)
p^{-(\alpha+1/2)a}p^{-(\beta+1/2)b}  = \sum_{a=0}^{\infty}
\sum_{{b=0}\atop{a+b \;even}}^a
(b+1)p^{-(\alpha+1/2)a}p^{-(\beta+1/2)b} \nonumber
\\
&&+\sum_{b=0}^{\infty} \sum_{{a=0}\atop{a+b \;even}}^b
(a+1)p^{-(\alpha+1/2)a}p^{-(\beta+1/2)b}- \sum_{a=0}^{\infty}
(a+1)p^{-(\alpha+1/2)a}p^{-(\beta+1/2)b} \nonumber \\
&& = \frac{\Big(1-\frac{1}{p^{1+2\alpha}}\Big)^{-1} \Big(
1-\frac{1}{p^{1+\alpha+\beta}}\Big)^{-1}\Big(1-\frac{1}{p^{1+2\beta}}\Big)^{-1}}
{\Big(1+\frac{1}{p^{1+\alpha+\beta}}\Big)^{-1}}.
\end{eqnarray}
Note that if all the primes contributed an expression of this
form, the Euler product would yield
\begin{equation}
\frac{\zeta(1+2\alpha)\zeta(1+\alpha+\beta)^2\zeta(1+2\beta)}
{\zeta(2+2\alpha+2\beta)}
\end{equation}
which has a fourth order pole as $\alpha$ and $\beta$ approach
zero.

For primes $p=3,5,6 \mod 7$,
\begin{eqnarray}\label{eq:primes2}
&&\sum_{a,b=0}^{\infty} \delta(p^a,p^b)
p^{-(\alpha+1/2)a}p^{-(\beta+1/2)b}  = \sum_{a,b=0}^{\infty}
(-1)^{a+b} p^{-2a(\alpha+1/2)}p^{-2b(\beta+1/2)} \nonumber \\
&&\qquad =
\Big(1+\frac{1}{p^{1+2\alpha}}\Big)^{-1}\Big(1+\frac{1}{p^{1+2\beta}}\Big)^{-1}
=
\frac{\Big(1-\frac{1}{p^{2+4\alpha}}\Big)^{-1}\Big(1-\frac{1}{p^{2+4\beta}}\Big)^{-1}}
{\Big(1-\frac{1}{p^{1+2\alpha}}\Big)^{-1}\Big(1-\frac{1}{p^{1+2\beta}}\Big)^{-1}}
\end{eqnarray}
If all the primes had this contribution the resulting product
would be
\begin{equation}
\frac{\zeta(2+4\alpha)\zeta(2+4\beta)}
{\zeta(1+2\alpha)\zeta(1+2\beta)}.
\end{equation}
which has a second order zero. Thus, half the primes lead to a 4th order pole and the other half to a second order zero;
taken together over all the primes we have  a pole of order $\frac{4}{2} -\frac{2}{2}=1$, 
  which is what we expect from
an orthogonal family second moment.

Combining (\ref{eq:primes1}) and (\ref{eq:primes2}) we have that
(\ref{eq:euler}) is equal to
\begin{eqnarray}
&&=\frac{L(1+2\alpha,\chi_{-7})L(1+2\beta,\chi_{-7})
\zeta_7(1+\alpha+\beta) L(1+\alpha+\beta,\chi_{-7})}
{\zeta_7(2+2\alpha+2\beta)},
\end{eqnarray}
where
\begin{equation}
\zeta_7(s)=\prod_{p\neq 7} \Big(1-\frac{1}{p^s}\Big)^{-1}.
\end{equation}

Thus the second moment would be
\begin{eqnarray}
&&\frac{1}{N}
\sum_{n=1}^NL(1/2+\alpha,\chi^{4n-3})L(1/2+\beta, \chi^{4n-3}) =
\frac{1}{N} \sum_{n=1}^N \Bigg( \zeta(1+\alpha+\beta)
F(\alpha,\beta) \nonumber \\
&&\qquad\qquad\qquad+
\Big(\frac{7}{2\pi}\Big)^{-2\alpha}\frac{\Gamma(2n-1-\alpha)}{\Gamma(2n-1+\alpha)}
\zeta(1-\alpha+\beta) F(-\alpha,\beta)  \nonumber \\
&&\qquad\qquad\qquad+\Big(\frac{7}{2\pi}
\Big)^{-2\beta}\frac{\Gamma(2n-1-\beta)}{\Gamma(2n-1+\beta)}
\zeta(1+\alpha-\beta)F(\alpha,-\beta) \nonumber \\
&&\qquad\qquad\qquad +\Big(\frac{7}{2\pi}\Big)^{-2\alpha-2\beta}
\frac{\Gamma(2n-1-\alpha)}{\Gamma(2n-1+\alpha)}\frac{\Gamma(2n-1-\beta)}{\Gamma(2n-1+\beta)}
\zeta(1-\alpha-\beta)F(-\alpha,-\beta)\Bigg)\\
\nonumber
&&\qquad\qquad\qquad+O(N^{-1/2+\epsilon}),\label{eq:shiftedsecond}
\end{eqnarray}
where
\begin{equation}
F(\alpha,\beta)=\frac{L(1+2\alpha,\chi_{-7})L(1+2\beta,
\chi_{-7})L(1+\alpha+\beta,\chi_{-7}) \Big
(1-\frac{1}{7^{1+\alpha+\beta}}\Big)} {\zeta(2+2\alpha+2\beta)
\Big(1-\frac{1}{7^{2+2\alpha+2\beta}}\Big)}.
\end{equation}

Now we want to set $\alpha, \beta \rightarrow 0$.  We group the
first and fourth term in the numerator of (\ref{eq:shiftedsecond}) and in the
second and third term send $\alpha \rightarrow -\alpha$ (allowable
because we are going to take the limit $\alpha\rightarrow 0$) and
then factor out the exponential and gamma factors
\begin{equation}\label{eq:factorout}
\Big(\frac{7}{2\pi}\Big)^{2\alpha}\frac{\Gamma(2n-1+\alpha)}{\Gamma(2n-1-\alpha)}
\end{equation}
to leave exactly the first and fourth terms again.  The expression
(\ref{eq:factorout}) tends to 1 as $\alpha,\beta \rightarrow 0$.
Thus we expect that the second moment evaluated at the central point is
\begin{eqnarray}
&&\mathcal M_2(N):=\lim_{\alpha,\beta\rightarrow 0} \frac{1}{N} \sum_{n=1}^N
L(1/2+\alpha,\chi^{4n-3})L(1/2+\beta, \chi^{4n-3})\nonumber \\
&& \qquad \qquad =\lim_{\alpha,\beta\rightarrow 0}  \frac{1}{N} \sum_{n=1}^N
\Bigg( 1+\Big(\frac{7}{2\pi}\Big)^{2\alpha}
\frac{\Gamma(2n-1+\alpha)}{\Gamma(2n-1-\alpha)} \Bigg) \Bigg(
\zeta(1+\alpha+\beta)F(\alpha,\beta)\nonumber \\
&&\qquad\qquad\qquad
+\Big(\frac{7}{2\pi}\Big)^{-2\alpha-2\beta}\frac{\Gamma(2n-1-\alpha)}{\Gamma(2n-1+\alpha)}
\frac{\Gamma(2n-1-\beta)}{\Gamma(2n-1+\beta)}
\zeta(1-\alpha-\beta)F(-\alpha,-\beta)\Bigg)\nonumber \\
&&\qquad\qquad\qquad+O(N^{-1/2+\epsilon})\nonumber \\
&&\qquad \qquad =4\Bigg(f_0\gamma+f_1-f_0\log\frac{2\pi}{7} +f_0 \frac{1}{N+1}
\sum_{n=1}^N \frac{\Gamma'(2n-1)}{\Gamma(2n-1)} \Bigg)+O(N^{-1/2+\epsilon}),
\end{eqnarray}
where we expand $F(a,b)$ around $a=0,b=0$ as
\begin{equation}
F(a,b)=f_0+f_1a+f_1b+\cdots
\end{equation}
with
\begin{equation}
f_0=\frac{L(1,\chi_{-7})^3}{\zeta(2)}\frac{7}{8}
=\Big(\frac{\pi}{\sqrt{7}}\Big)^3 \frac{6}{\pi^2} \frac{7}{8} =
\frac{3\pi}{4\sqrt{7}}
\end{equation}
and
\begin{equation}
\frac{\partial}{\partial \alpha}
F(\alpha,\beta)\Big|_{\alpha,\beta=0} = f_1 =
f_0\Big(3\frac{L'}{L}(1,\chi_{-7}) -2\frac{\zeta'}{\zeta}(2)
+\frac{\log 7}{8}\Big).
\end{equation}
So, the our conjecture is that
\begin{equation}\label{eq:M2}
\mathcal M_2(N)=\frac{3\pi}{\sqrt{7}} \Big(\gamma+3\frac{L'}{L}(1,\chi_{-7})
-2\frac{\zeta'}{\zeta}(2)+ \frac{\log 7}{8} -\log
\frac{2\pi}{7}+\frac{1}{N}\sum_{n=1}^N\frac{\Gamma'(2n-1)}{\Gamma(2n-1)}
\Big) +O(N^{-1/2+\epsilon}).
\end{equation}
It can be shown that this reduces to
\begin{conjecture}
\begin{eqnarray}
\mathcal M_2(N)&=&\frac{3\pi}{\sqrt{7}} (\log N +C) +O(N^{-1/2+\epsilon})
\end{eqnarray}
where
\begin{eqnarray}
C=
4\gamma  -3\log\frac{\Gamma(1/7)\Gamma(2/7)\Gamma(4/7)}
{\Gamma(3/7)\Gamma(5/7)\Gamma(6/7)}
-2\frac{\zeta'}{\zeta}(2)+ \frac{\log 7}{8} +\log
7\pi^2+3\log2-1
 .
\end{eqnarray}
\end{conjecture}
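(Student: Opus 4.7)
The plan is to reduce equation (\ref{eq:M2}) to the claimed form by evaluating in closed form the two ingredients that obscure its shape: the digamma average $\frac{1}{N}\sum_{n=1}^N \psi(2n-1)$, which produces the $\log N$ term, and the logarithmic derivative $\frac{L'}{L}(1,\chi_{-7})$, which supplies the quotient of Gamma values at multiples of $1/7$ inside the constant $C$. Everything else in (\ref{eq:M2}) is already in acceptable closed form.

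For the digamma sum I would use the asymptotic $\psi(m) = \log m + O(1/m)$ together with the elementary identity $\sum_{n=1}^N \log(2n-1) = \log\bigl((2N)!/(2^N N!)\bigr)$. A direct application of Stirling then gives
\[
\frac{1}{N}\sum_{n=1}^N \psi(2n-1) = \log N + \log 2 - 1 + O(\log N/N),
\]
and the error is comfortably absorbed into the $O(N^{-1/2+\epsilon})$ already present in (\ref{eq:M2}).

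For $\frac{L'}{L}(1,\chi_{-7})$ I would follow the standard Hurwitz-zeta route. Write $L(s,\chi_{-7}) = 7^{-s}\sum_{a=1}^{6}\chi_{-7}(a)\,\zeta(s,a/7)$. Since $\sum_a \chi_{-7}(a) = 0$, the poles of the Hurwitz terms cancel, and Lerch's formula $\zeta'(0,x) = \log\Gamma(x) - \tfrac12\log(2\pi)$ combined with the direct computation $L(0,\chi_{-7}) = 1$ gives
\[
L'(0,\chi_{-7}) = -\log 7 + \log\frac{\Gamma(1/7)\Gamma(2/7)\Gamma(4/7)}{\Gamma(3/7)\Gamma(5/7)\Gamma(6/7)}.
\]
To transfer this value to $s=1$, I would take the logarithmic derivative of the completed functional equation $\Lambda(s,\chi_{-7}) = \Lambda(1-s,\chi_{-7})$ with $\Lambda(s,\chi) = (\pi/7)^{-(s+1)/2}\,\Gamma\bigl((s+1)/2\bigr)L(s,\chi)$, evaluating at $s=0$ and $s=1$ and using $\psi(1) = -\gamma$, $\psi(1/2) = -\gamma - 2\log 2$. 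A short manipulation yields
\[
\frac{L'}{L}(1,\chi_{-7}) = \gamma + \log(2\pi) - \log\frac{\Gamma(1/7)\Gamma(2/7)\Gamma(4/7)}{\Gamma(3/7)\Gamma(5/7)\Gamma(6/7)}.
\]

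Substituting both evaluations into (\ref{eq:M2}), the $\log N$ contribution emerges with coefficient $\frac{3\pi}{\sqrt{7}}$, and the remaining constant pieces consolidate once one combines $2\log(2\pi) + \log 7 + \log 2 = \log(7\pi^2) + 3\log 2$. Collecting the $\gamma$, $\zeta'/\zeta(2)$, Gamma-quotient, and numerical log-terms then produces exactly the stated $C$. The main obstacle is purely algebraic bookkeeping -- tracking the signs and logarithms of small rational constants through the substitution -- rather than any analytic difficulty; all the serious analytic input has already been carried out in deriving (\ref{eq:M2}).
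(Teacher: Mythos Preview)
Your proposal is correct. The paper does not actually supply a proof here --- it simply asserts that equation (\ref{eq:M2}) ``can be shown'' to reduce to the stated form --- and your evaluation of the digamma average via Stirling together with your computation of $\frac{L'}{L}(1,\chi_{-7})$ through the Hurwitz zeta function and the functional equation is precisely the natural way to fill in the omitted details; the final algebraic consolidation you describe indeed produces the constant $C$ exactly as stated.
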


With $N=469$, computing $L$-values and evaluating  $\mathcal
M_2(N)$ numerically gives 28.37.  The main term of equation (\ref{eq:M2}) gives
28.35.

\section{One-level density}
In this section we assume the Riemann Hypothesis for the family of  $L(s,\chi^{4n-3})$
and calculate the one-level density for this family, valid for test functions whose Fourier transforms are supported in $[-\alpha,\alpha]$ where
$\alpha<1$. As a consequence of our calculation we can show
\begin{theorem} If the Riemann Hypothesis for the family $\{L(s,\chi^{4n-3})\}$ is true, then $L(1/2,\chi^{4n-3})\ne 0$ for at least $(1/4-\epsilon)N$ values of  $n\le N$.
  \end{theorem}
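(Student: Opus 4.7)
The plan is to compute the one-level density of the family via the explicit formula and then extract a non-vanishing bound, exploiting the key observation that the even root number forces any central zero to have order at least two. The functional equation for $L(s,\chi^{4n-3})$ has root number $(-1)^{(2n-1)-1}=+1$ for every $n$, so $\Phi_{4n-3}(s)$ is even about $s=1/2$; hence $m_n:=\operatorname{ord}_{s=1/2}L(s,\chi^{4n-3})$ is even, and $L(1/2,\chi^{4n-3})=0$ forces $m_n\ge 2$.

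Fix an even Schwartz function $\phi\ge 0$ with $\hat\phi$ supported in $[-\alpha,\alpha]$ for some $\alpha<1$. Under RH, the explicit formula for $L(s,\chi^{4n-3})$ (whose analytic conductor $Q_n\asymp n^2$ comes from the gamma factor $\Gamma(s+2n-3/2)$) reads
\begin{equation*}
\sum_{\gamma_n}\phi\Big(\gamma_n\tfrac{\log Q_n}{2\pi}\Big)=\hat\phi(0)-\frac{2}{\log Q_n}\sum_{p,k\ge 1}\frac{(\alpha_p(n)^k+\overline{\alpha_p(n)}^k)\log p}{p^{k/2}}\hat\phi\Big(\tfrac{k\log p}{\log Q_n}\Big)+o(1).
\end{equation*}
For split primes $p=|a+b\eta|^2$ the Satake parameter is $\alpha_p(n)=\epsilon_{a,b}\delta_{a,b}^{4n-3}$, while for inert primes $(\tfrac{p}{7})=-1$ one computes $\alpha_p=i$, $\overline{\alpha_p}=-i$ independently of $n$. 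After averaging over $n\le N$, the split-prime contributions for every $k$ reduce to sums of the form $\sum_{n\le N}\delta_{a,b}^{4n+c}$ which, by the geometric-sum estimates of Theorem~\ref{theo:mom1} (the only $\delta_{a,b}$ with $\delta_{a,b}^{4}=1$ come from $b=0$ or $2a=-b$, neither producing a generic prime), are $o(N)$. For inert primes, odd $k$ gives $i^{k}+(-i)^{k}=0$, the $k\ge 4$ terms are absolutely convergent and hence $O(1/\log Q_n)$, and the $k=2$ term $\frac{4}{\log Q_n}\sum_{p\text{ inert}}\frac{\log p}{p}\hat\phi(2\log p/\log Q_n)$ evaluates via PNT for primes inert in $\mathbb{Q}(\sqrt{-7})$ (density $1/2$) to $\tfrac12\phi(0)+o(1)$. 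Therefore
\begin{equation*}
\frac{1}{N}\sum_{n=1}^N\sum_{\gamma_n}\phi\Big(\gamma_n\tfrac{\log Q_n}{2\pi}\Big)=\hat\phi(0)+\tfrac12\phi(0)+o(1),
\end{equation*}
which is precisely the one-level density of SO(even) restricted to $\hat\phi$ supported in $(-1,1)$ and confirms the orthogonal symmetry of the family.

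Non-vanishing follows from $\sum_{\gamma_n}\phi\ge m_n\phi(0)\ge 2\phi(0)\mathbf{1}_{\{L(1/2,\chi^{4n-3})=0\}}$, which yields
\begin{equation*}
\frac{2\phi(0)}{N}\#\{n\le N:L(1/2,\chi^{4n-3})=0\}\le\hat\phi(0)+\tfrac12\phi(0)+o(1).
\end{equation*}
The Fej\'er kernel $\phi(x)=(\sin(\pi\alpha x)/(\pi\alpha x))^2$ is non-negative, has Fourier support $[-\alpha,\alpha]$, and achieves $\hat\phi(0)/\phi(0)=1/\alpha$, which is optimal among non-negative bandlimited functions by a Paley--Wiener plus Cauchy--Schwarz argument. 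With this choice the vanishing proportion is at most $\tfrac{1}{2\alpha}+\tfrac14+o(1)$, so letting $\alpha\to 1^-$ yields non-vanishing for at least $(\tfrac14-\epsilon)N$ of the $L$-functions, as claimed. The main technical obstacle is controlling the averaged split-prime contribution uniformly in $(a,b)$ and in the weight $\hat\phi(k\log p/\log Q_n)$: one must combine the $1/|\sin 4\pi\theta_{a,b}|$-bound of equation~(\ref{eqn:sin}) with partial summation in $p$, in close parallel with the first-moment argument of Theorem~\ref{theo:mom1}.
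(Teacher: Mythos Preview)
Your approach is essentially the paper's: compute the one-level density via the explicit formula, identify the SO(even) answer $\hat\phi(0)+\tfrac12\phi(0)$ for test functions with Fourier support in $(-1,1)$, and convert to a nonvanishing proportion using the even root number (so $m_n\ge 2$ whenever the central value vanishes) together with the Fej\'er kernel. The one organizational difference is in the prime-squared term. The paper uses $\alpha_n(p)^2+\overline{\alpha_n(p)}^2=\chi^{(4n-3)}(p^2)/p^{4n-3}-1$, sums the $-1$ over \emph{all} primes to extract the main term $\tfrac12\phi(0)$, and then handles the $\chi^{(4n-3)}(p^2)$-piece by summing over representations of $p^2$; the diagonal $b=0$ contribution becomes a small $\sum_p(\tfrac{p}{7})\tfrac{\log p}{p}\,\hat f(\cdot)$ controlled by $L'/L(1,\chi_{-7})$. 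You instead separate inert primes (constant $-2$, giving the main term via PNT in the inert class) from split primes (oscillatory). These decompositions are equivalent, since $-1+(\tfrac{p}{7})$ is $-2$ for inert and $0$ for split primes.

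One small imprecision to fix: for $k\ge 2$ the geometric sum over $n$ has ratio $\delta_{a,b}^{4k}$, not $\delta_{a,b}^{4}$, so for the split $k=2$ term you need $\delta_{a,b}^{8}\ne 1$ when $a^2+ab+2b^2=p$ is prime, not just the $\delta_{a,b}^4\ne 1$ checked in Theorem~\ref{theo:mom1}. This holds because $\cos 4\pi\theta_{a,b}=0$ would force $2a^2+2ab-3b^2=0$, which has no integer solutions; but it should be stated.
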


   Let
$\phi(t)=F(\tfrac{1}{2}+it)$ be an even test function whose
Fourier transform has compact support. Then $F$ is entire and
decays quickly with $|t|$.  We derive an explicit formula.
Defining
\begin{equation}\label{eq:defLambda}
-\frac{L'(s,\chi^{4n-3})}{L(s,\chi^{4n-3})}=\sum_{k=1}^\infty\frac{\Lambda_{4n-3}(k)}{k^s},
\end{equation}
we write
\begin{eqnarray}\label{eq:explicit1}
\frac{1}{2\pi i} \int_{(2)}
\frac{L'}{L}(s,\chi^{4n-3})F(s)ds &=& -\sum_{k=1}^\infty
\Lambda_{4n-3}(k) \frac{1}{2\pi i} \int_{(2)}
F(s)k^{-s}ds\nonumber \\
&=&-\sum_{k=1}^\infty \Lambda_{4n-3}(k) \frac{1}{2\pi \sqrt{k}}
\int_{-\infty}^{\infty}\phi(t) e^{-it\log k} dt
\nonumber \\
&=&-\frac{1}{2\pi} \sum_{k=1}^\infty \frac{\Lambda_{4n-3}(k)}{\sqrt{k}}
\hat{\phi}\left(\frac{\log k}{2\pi}\right).
\end{eqnarray}
Here we use the definition
\begin{equation}
\hat{f}(t)=\int_{-\infty}^{\infty} f(u)e(-ut)du.
\end{equation}

On the left side of (\ref{eq:explicit1}) we move the contour to
the vertical line with real part 1/4 (note we've assumed RH and so
have encircled all the zeros).  Thus we have, with
$\rho_n=1/2+\gamma_n$ a generic\footnote{The subscript $n$ on $\rho_n$ merely means that it is a zero of $L(s,\chi^{4n-3})$ and should not 
 be mistaken for the $n$th zero.} zero
\begin{eqnarray}
\frac{1}{2\pi i} \int_{(2)}
\frac{L'}{L}(s,\chi^{4n-3})F(s)ds &=&\sum_{\rho_n} F(\rho_n)
+\frac{1}{2\pi i} \int_{(1/4)}
\frac{L'}{L}(s,\chi^{4n-3})F(s)ds\nonumber \\
&=&\sum_{\gamma_n} \phi(\gamma_n) +\frac{1}{2\pi i} \int_{(1/4)}
\left( \frac{{X}'_{4n-3}}{{X}_{4n-3}}(s)-\frac{L'}{L}(1-s,\chi^{4n-3})\right) F(s) ds
\nonumber \\
&=& \sum_{\gamma_n} \phi(\gamma_n) +\frac{1}{2\pi}
\int_{-\infty}^{\infty}\frac{{X}'_{4n-3}}{X_{4n-3}}(1/2+it) \phi(t)dt\nonumber
\\
&&\qquad-\frac{1}{2\pi i} \int_{(3/4)}
\frac{L'}{L}(s,\chi^{4n-3})F(1-s)ds.
\end{eqnarray}
In the last integral the contour can be moved to the vertical line
with real part 2.
\begin{eqnarray}
-\frac{1}{2\pi i} \int_{(2)} \frac{L'}{L}(s,\chi^{4n-3})
F(1-s) ds &=& \sum_{k=1}^\infty \Lambda_{4n-3}(k)\;\frac{1}{2\pi i} \int_{(2)}
F(1-s) k^{-s}
ds\nonumber \\
&=&\sum_{k=1}^\infty \Lambda_{4n-3}(k)\;\frac{1}{2\pi i} \int_{(1/2)} F(s) k^{s-1}
ds\nonumber \\
&=&\sum_{k=1}^\infty \Lambda_{4n-3}(k)\;\frac{1}{2\pi } \int_{-\infty}^{\infty}
\frac{\phi(t)}{\sqrt{k}} k^{it}
dt\nonumber \\
&=&\frac{1}{2\pi}\sum_{k=1}^\infty\frac{\Lambda_{4n-3}(k)}{\sqrt{k}}
\hat{\phi}\left( \frac{\log k}{2\pi}\right).
\end{eqnarray}
So,
\begin{equation}
\sum_{\gamma} \phi(\gamma) = -\frac{1}{2\pi}
\int_{-\infty}^{\infty} \frac{X'}{X}(1/2+it) \phi(t)dt -
\frac{1}{\pi} \sum_{k=1}^\infty \frac{\Lambda_{4n-3}(k)}{\sqrt{k}}
\hat{\phi}\left( \frac{\log k}{2\pi}\right).
\end{equation}

We really want to work with scaled zeros in the explicit formula,
so now we define a set of zeros $\widetilde{\gamma}_n$ that have
average consecutive spacing of 1.  We need to know how many zeros
$L(s,\chi^{4n-3})$ has in the interval $0<t<T$, where $T$ is large
but bounded.  We calculate the change in argument of
\begin{equation}
\xi(s,\chi^{4n-3})=\left(\frac{7}{2\pi}\right)^s
\Gamma(s+2n-3/2)L(s,\chi^{4n-3})
\end{equation}
around a contour enclosing these zeros.

We have, with the contour $C$ defined as a rectangle with corners
2, $2+iT$, $-1+iT$ and $-1$,
\begin{eqnarray}
&&\# \{ \gamma_n\leq T: L(1/2+i\gamma,\chi^{4n-3})=0\}\nonumber
\\&&\quad =
\frac{1}{2\pi}\Delta_C \arg (\xi(s,\chi^{4n-3}))\nonumber \\
&&\quad =\frac{1}{\pi} \Delta \arg\left(
\left(\frac{7}{2\pi}\right)^{s}
\Gamma(s+2n-3/2)\right)\big|_{s=1/2}^{s=1/2+iT} +O\left(\frac{\log 2n}{\log \log 3n}\right)\nonumber \\
&&\quad = \frac{T}{\pi} \log 2n  +O\left(\frac{\log 2n}{\log \log
3n}\right).
\end{eqnarray}
In the third line we consider just half the contour $C$ (from
$s=1/2$, through $s=2$ and $s=2+iT$, to $s=1/2+iT$) because the
functional equation gives
$\xi(\sigma+it)=\xi(1-\sigma-it)=\overline{\xi(1-\sigma+it)}$. The
change in argument of $L(s,\chi^{4n-3})$ is contained in the
error term and follows by standard methods on assuming the Riemann
Hypothesis for this $L$-function.  For the final line we use
Stirling's formula and so we see that the zeros, $\gamma_n$, of
$L(s,\chi^{4n-3})$ need to be scaled as
\begin{equation}
\widetilde{\gamma}_n=\frac{\log 2n}{\pi} \gamma_n
\end{equation}
in order to have approximate unit mean spacing.

We now define a new test function\footnote{For convenience we scale up by $\log N$ instead of the asymptotically equal $\log 2n$.}
\begin{equation}
\phi(x)=f\left(\frac{x\log N}{\pi}\right).
\end{equation}
We note that
\begin{eqnarray}
\hat{\phi}(x)&:=&\int_{-\infty}^{\infty} \phi(u)e(-ux)du \nonumber
\\&=& \int_{-\infty}^{\infty} f\left(\frac{u\log N}{\pi}\right)
e(-ux)
du\nonumber \\
&=& \frac{\pi}{\log N}\int_{-\infty}^{\infty} f(u)e\left(-\frac{u
\pi x}{\log N}\right) du \nonumber \\ &=&\hat{f}\left(\frac{\pi
x}{\log N}\right) \;\frac{\pi}{\log N}.
\end{eqnarray}

With the support of $\hat{f}$ restricted, ${\rm supp} \hat{f}
\subset [-\alpha,\alpha]$, $\alpha<1$,  we have a scaled explicit
formula:
\begin{eqnarray}\label{eq:efscaled}
&&\frac{1}{N}\sum_{n=1}^N\sum_{\gamma_n}f(\gamma_n \log N/\pi)
=\sum_{n=1}^N \left( -\frac{1}{2\pi N} \int_{-\infty}^{\infty}
\frac{{X}_{4n-3}'}{{X}_{4n-3}}(1/2+it) f\left(\frac{t\log N}{\pi}\right) dt
\right.\nonumber\\
&&\qquad\qquad \left.-\frac{1}{N\log N} \sum_{k=1}^{\infty}
\frac{\Lambda_{4n-3}(k)}{\sqrt{k}} \hat{f}\left(\frac{\log k} {2\log
N}\right)\right).
\end{eqnarray}

To evaluate the right side of this formula we start with
\begin{eqnarray}
\sum_{n=1}^N \frac{{X}_{4n-3}'}{{X}_{4n-3}}(1/2+it)&=&
\sum_{n=1}^N \Big(-\frac{\Gamma'}{\Gamma}(2n-1-it)-\frac{\Gamma'}{\Gamma}(2n-1+it)
-2\log \frac{7}{2\pi}\Big)\nonumber \\
&=&-2\sum_{n=1}^N \left( \log 2n +
O(1)\right)\nonumber \\
&=&-2N\log N +O(N)
\end{eqnarray}
for bounded $t$.
So we have
\begin{eqnarray}
\left(\frac{\log N}{\pi} +O(1)\right) \int_{-\infty}^{\infty}
f\left( \frac{t\log N}{\pi}\right) dt &=& \left(
1+O\left(\frac{1}{\log N}\right) \right) \int_{-\infty}^{\infty}
f(t)dt \nonumber \\
&=&\left( 1+O\left(\frac{1}{\log N}\right) \right)
\hat{f}(0)\nonumber \\
&\rightarrow& \hat{f}(0) \; {\rm as} \; N\rightarrow \infty.
\end{eqnarray}

Next we address the sum in (\ref{eq:efscaled}) containing
$\Lambda_{4n-3}(k)$.  With $\eta=(1+\sqrt{-7})/2$, we recall
\begin{equation}
L(s,\chi^{4n-3}) = \sum_{k=1}^{\infty}\frac{\chi^{(4n-3)}(k)}{k^{s+2n-3/2}} =
\frac{1}{2} \sum_{k=1}^{\infty} \sum_{a^2+ab+2b^2=k}
\epsilon_{a,b} \frac{(a+b\eta)^{4n-3}} {k^{s+2n-3/2}}.
\end{equation}
The Euler product is
\begin{equation}
L(s,\chi^{4n-3})= \prod_{p\neq 7} \left(
1-\frac{\chi^{(4n-3)}(p)}{p^{s+2n-3/2}}+\frac{p^{4n-3}}{p^{2s}}\right)^{-1}=\prod_{p\neq 7}
\left( 1-\frac{\alpha_n(p)}{p^s}\right)^{-1}
\left(1-\frac{\overline{\alpha_n(p)}}{p^s} \right)^{-1},
\end{equation}
where the $\alpha_p$ have the properties
\begin{eqnarray}
|\alpha_n(p)|&=&1;\nonumber \\
\frac{\chi^{(4n-3)}(p)}{p^{2n-3/2}}=\alpha_n(p)+\overline{\alpha_n(p)}&=&\frac{1}{2}\frac{\sum_{a^2+ab+2b^2=p}\epsilon
_{a,b} (a+b\eta)^{4n-3}} {p^{2n-3/2}};\nonumber \\
\frac{\chi^{(4n-3)}(p^2)}{p^{4n-3}}&=&\alpha_n(p)^2+1+\overline{\alpha_n(p)}^2.
\end{eqnarray}
Thus,
\begin{eqnarray}
\frac{L'}{L}(s,\chi^{4n-3})&=&\frac{d}{ds} \left(-\sum_{p\neq 7}
\log\left(1-\frac{\alpha_n(p)}{p^s}\right) +\log
\left(1-\frac{\overline{\alpha_n(p)}}{p^s}\right)\right) \nonumber\\
&=&\frac{d}{ds}\sum_{p\neq 7} \sum_{r=1}^\infty\left(
\frac{\alpha_n(p)^r}{rp^{rs}}
+\frac{\overline{\alpha_n(p)^r}}{rp^{rs}}\right)\nonumber \\
&=&-\sum_{p\neq 7} \log p \sum_{r=1}^\infty
\frac{(\alpha_n(p)^r+\overline{\alpha_n(p)}^r)}{p^{rs}},
\end{eqnarray}
and so by (\ref{eq:defLambda}) we have $\Lambda_{4n-3}(k)=0$ if $k$ is
not a prime power and
\begin{eqnarray}
\Lambda_{4n-3}(p^r) &= &\log p
\;(\alpha_n(p)^r+\overline{\alpha_n(p)}^r).
\end{eqnarray}

For all $k=p^r$, with $r\geq 3$, the sum over the coefficients
$\Lambda_{4n-3}(k)$ in (\ref{eq:efscaled}) is, with the sum over $p$
running over primes,
\begin{eqnarray}
&&\frac{1}{N\log N} \sum_{n=1}^N\sum_{p} \sum_{r\geq 3}\frac{\log
p(\alpha_n(p)^r+\overline{\alpha_n(p)}^r)}{p^{r/2}}
\hat{f}\left(\frac{r\log p} {2\log N}\right)\nonumber \\
&&\qquad \ll \frac{1}{\log N} \sum_{p} \frac{\log p}{p^{3/2}}
\nonumber \\
&&\qquad\ll\frac{1}{\log N}.
\end{eqnarray}
Here the second line follows because $\hat{f}$ and $\alpha_n(p)$
are bounded and once removed leave a geometric series in $r$ and
no dependence on $n$.

When $k=p$ is a prime, we have the sum
\begin{eqnarray}\label{eq:kprime}
&&\frac{-1}{2N\log N} \sum_{n=1}^N \sum_{p\leq N^{2\alpha}}
\hat{f}\left(\frac{\log p}{2 \log N}\right) \frac{\log
p}{\sqrt{p}} \sum_{a^2+ab+2b^2=p} \xi_{a,b} e(4n\theta_{a,b}),
\end{eqnarray}
where
\begin{equation}
\xi_{a,b}=\epsilon_{a,b} e(-3\theta_{a,b})
\end{equation}
and
\begin{equation}
e(\theta_{a,b})=\frac{a+b\eta}{\sqrt{a^2+ab+2b^2}}.
\end{equation}
In this case  $4\theta_{a,b}\notin \mathbb{Z}$ because
$e(\theta_{a,b})\neq 1,-1,i,-i$: if $b=0$ then $e(\theta_{a,b})
=\pm 1$ but then $a^2+0+0\neq p$ for any prime $p$ and if
$a+b/2=0$ then $e(\theta_{a,b})$ is purely imaginary, but
$a^2-2a^2+2(2a)^2=7a^2\neq p$ for any prime $p$. Consequently, as
in the proof of the first moment (see (\ref{eqn:sin}), we have
\begin{equation}\label{eq:p1}
\big| \sum_{n=1}^N e(4n\theta)\big| = \big|
\frac{e(4(N+1)\theta)-e(4\theta)} {e(4\theta)-1} \big| \leq
\frac{1}{|\sin 4\pi \theta|}.
\end{equation}
We then have the estimate
\begin{equation}\label{eq:p2}
\frac{1}{\sin 4\pi\theta_{a,b}} \ll \sqrt{a^2+ab+2b^2} \times \max
\left\{ \frac{1}{|a+b/2|},\frac{1}{|b|}\right\}.
\end{equation}
So, we approximate (\ref{eq:kprime}) with
\begin{eqnarray}
&&\frac{-1}{2N\log N}\sum_{a,b\leq N^{\alpha}}
\max\left\{\frac{1}{|a+b/2|},\frac{1}{|b|}\right\} \ll \frac{1}{N\log N} N^{\alpha} \log N.
\end{eqnarray}
In the first line above we have discarded the bounded quantities
$\hat{f}\big(\frac{\log p}{2\log N}\big)$ and $\xi_{a,b}$,
replaced the sum over $n$ of $e(4n\theta_{a,b})$ using
(\ref{eq:p1}) and (\ref{eq:p2}), and removed the requirement that
$a^2+2ab+2b^2$ be a prime; we allow it to be any integer but drop
the $\log p$ from the equation.  The final line comes from the sum
over $a$ and $b$.  The $1/|b|$ sum can be evaluated as
$\sum_{a=1}^{N^{\alpha}}1$ times
$\sum_{b=1}^{N^{\alpha}}\tfrac{1}{b}$, and the $1/|a+b/2|$ sum is
$\sum_{n=1}^{3N^{\alpha}} \frac{f(n)}{n/2}$, where $f(n)$, the
number of ways to obtain $a+b/2=n$ is certainly less than
$N^{\alpha}$, giving a result of $N^{\alpha}\log N$.

The final case is $k=p^2$ for all primes $p$.  Recall that
\begin{eqnarray}
\Lambda_{4n-3}(p^2)&=&\log p
\;(\alpha_n(p)^2+\overline{\alpha_n(p)}^2)\nonumber \\
&=& \log p \;\Big(\frac{\chi^{(4n-3)}(p^2)}{p^{4n-3}}-1\Big)
\end{eqnarray}
So, the relevant term from the explicit formula is
\begin{eqnarray}\label{eq:squares}
&&\frac{-1}{N\log N}\sum_{n=1}^N\sum_{p\leq N^\alpha}
\hat{f}\left(\frac{\log p}{\log N}\right) \frac{\log p}{p}\left(
\frac{1}{2} \sum_{a^2+ab+2b^2=p^2} \xi_{a,b} e(4n\theta_{a,b})
-1\right).
\end{eqnarray}
If we consider first just the term with the $-1$ in the final
bracket we get, using the Prime Number Theorem, and remembering
that $\hat{f}$ is even and has compact support in
$[-\alpha,\alpha]$,
\begin{eqnarray}
&& \frac{1}{\log N} \sum_{p\leq N^{\alpha}} \hat{f}\left(
\frac{\log p}{\log N}\right) \frac{\log p}{p} \sim \frac{1}{\log
N} \int_{1}^{N^{\alpha}} \hat{f}\left(\frac{\log u}{\log N}\right)
\frac{du}{u} \nonumber \\
&&\qquad\qquad =\int_{0}^{\alpha} \hat{f}(\beta) d
\beta=\frac{1}{2}  \int_{-1}^{1} \hat{f}(\beta)d\beta.
\end{eqnarray}
The remaining part of (\ref{eq:squares}) is
\begin{eqnarray}\label{eq:intornot}
&&\frac{-1}{2N\log N}\sum_{n=1}^N\sum_{p\leq N^\alpha}
\hat{f}\left(\frac{\log p}{\log N}\right) \frac{\log
p}{p}\sum_{a^2+ab+2b^2=p^2} \xi_{a,b} e(4n\theta_{a,b})
\\
&&=\frac{-1}{2N\log N}\sum_{p\leq N^\alpha}
\hat{f}\left(\frac{\log p}{\log N}\right) \frac{\log
p}{p}\sum_{a^2+ab+2b^2=p^2} \xi_{a,b}\left\{ \begin{array}{cc}
\frac{e(4(N+1)\theta_{a,b}) -e(4\theta_{a,b})}
{e(4\theta_{a,b})-1} & {\rm if\;\;} 4\theta_{a,b}\notin \mathbb{Z}\\
N & {\rm if\;\;} 4\theta_{a,b}\in
\mathbb{Z}.\end{array}\right.\nonumber
\end{eqnarray}
For $4\theta_{a,b}\notin \mathbb{Z}$, the contribution is
\begin{eqnarray}
&&\ll\frac{1}{N\log N} \sum_{a,b\leq N^{\alpha}} \frac{\max\left\{
\frac{1}{|a+b/2|},\frac{1}{|b|}\right\}}{\sqrt{a^2+ab+2b^2}}
\nonumber \\
&& \leq\frac{1}{N\log N} \sum_{a,b\leq N^{\alpha}} \frac{
\frac{1}{|a+b/2|}+\frac{1}{|b|}}{\sqrt{a^2+ab+2b^2}}
\nonumber \\
&& \ll\frac{1}{N\log N} \sum_{a,b\leq N^{\alpha}}
\frac{1}{|a+b/2|^2}+\frac{1}{|b|^2},
\end{eqnarray}
because $a^2+ab+2b^2=(a+b/2)^2+7b^2/4$.  Thus the
$4\theta_{a,b}\notin \mathbb{Z}$ contribution is
\begin{equation}
\ll \frac{1}{N\log N}{N^{\alpha}}.
\end{equation}

When $4\theta_{a,b}\in \mathbb{Z}$, then $b=0$, arguing as after
(\ref{eq:p2}).  The contribution to (\ref{eq:intornot}) is
\begin{eqnarray}\label{eq:intcase}
&& -\frac{1}{2\log N} \sum_{p\leq
N^{\alpha}}\hat{f}\left(\frac{\log p}{\log N}\right) \frac{\log
p}{p}\sum_{a=\pm p} \xi_{a,0}.
\end{eqnarray}
We see that
\begin{eqnarray}
\xi_{\pm p,0}&=&\epsilon_{\pm p,0}\; e(-3\theta_{\pm
p,0})\nonumber
\\
&=& \left(\frac{\pm p^3}{7}\right)\times \pm 1= \pm
\left(\frac{\pm p}{7}\right)=\left(\frac{p}{7}\right),
\end{eqnarray}
because  $\left(\frac{-1}{7}\right)=-1$.  Thus (\ref{eq:intcase})
becomes
\begin{eqnarray}
&& -\frac{1}{\log N}\sum_{p\leq N^{\alpha}}
\int_{-\infty}^{\infty}f(t) e\left(\frac{-t\log p}{\log N}\right)
dt \left(\frac{p}{7}\right) \frac{\log p}{p}\nonumber \\
&&\ll \frac{1}{\log N} \int_{-\infty}^{\infty} f(t) \sum_p\frac{
\left( \frac{p}{7}\right) \log p} {p^{1+\tfrac{2\pi i t}{\log N}}}
dt \nonumber \\
&&\ll\frac{1}{\log N} \int_{-\infty}^{\infty} f(t) \frac{L'}{L}
\left( 1+\frac{2\pi i t}{\log N}, \chi_{-7}\right) dt \ll
\frac{1}{\log N},
\end{eqnarray}
where the final approximation follows because $L'/L$ grows slowly
on the 1-line and $f(t)$ decays fast enough to keep the integral
bounded.

Thus for large $N$  (\ref{eq:efscaled}) becomes
\begin{eqnarray}
&&\frac{1}{N}\sum_{n=1}^N\sum_{\gamma_n}f(\gamma_n \log N/\pi)=
\hat{f}(0) +\frac{1}{2}  \int_{-1}^{1} \hat{f}(\beta)d\beta +o(1)
\nonumber \\
&&= \int_{-\infty}^{\infty} \hat{f}(\beta)\left(\delta(\beta)
+\frac{1}{2}  I_{[-1,1]}(\beta)\right) d\beta +o(1)\nonumber \\
&&=\int_{-\infty}^{\infty} f(y) \left( 1+\frac{\sin 2\pi y}{2\pi
y}\right)dy+o(1),
\end{eqnarray}
where $I_{[-1,1]}$ is the characteristic function on the interval
$[-1,1]$.  This is consistent with even orthogonal symmetry for
this family of $L$-functions: recall that for the group $SO(2N)$
the limiting form of the one-level density is
\begin{equation}
1+\frac{\sin 2\pi y}{2\pi y}
\end{equation}
and its Fourier transform is
\begin{equation}
\delta(u)+\frac{1}{2}I_{[-1,1]}(u).
\end{equation}

To derive the theorem we argue as in \cite{kn:ILS99}. Suppose that
our test function $f$ is non-negative, that $f(0)=1$ and that the
Fourier transform $\hat{f}$ is supported in $[-1,1]$. We write the
above equation as \begin{equation}
\frac{1}{N}\sum_{n=1}^N\sum_{\gamma_n}f(\gamma_n \log
N/\pi)=v+o(1).\end{equation} Let \begin{equation}p_m(N)=\frac 1 N
\#\{n\le N: \mbox{ the order of the zero of $L(s,\chi^{4n-3})$ at
$s=1/2$ is $m$}\}.\end{equation} Then
\begin{equation}\sum_{m=1}^\infty m p_m(N)\le
\frac{1}{N}\sum_{n=1}^N\sum_{\gamma_n}f(\gamma_n \log
N/\pi)=v+o(1).\end{equation} Clearly,
\begin{equation}\sum_{m=0}^\infty p_m(N)=1.\end{equation} Also, since all of the
$L(s,\chi^{4n-3})$ have even functional equations it follows that
$p_m(N)=0$ if $m$ is odd. Therefore,
\begin{eqnarray}
&&\sum_{m=1}^\infty m p_m(N) = 2p_2(N)+4p_4(N)+6p_6(N)+\dots
\nonumber \\ &&\qquad\ge 2p_2(N)+2p_4(N)+2p_6(N)+\dots = 2
-2p_0(N).
\end{eqnarray} Thus,
\begin{equation} \lim_{N\to \infty} p_0(N)\ge \frac{2-v}{2}.
\end{equation} With the
choice
\begin{equation}f(y)=\left(\frac {\sin \pi y}{\pi
y}\right)^2
\end{equation} we have
\begin{equation}\hat{f}(x)
=\max\{0,1-|x|\}
\end{equation} so that $v=3/2$. It follows that
$p_0(N)\ge 1/4-\epsilon.$ In words, at least one-fourth of the $L$
functions in our family do not vanish at their central points.

\section{One-level density from the ratios conjecture}

The previous section demonstrated a rigorous calculation of the
one-level density for the family of $L$-functions.  The
limitations of this technique is always the restricted support of
the Fourier transform of the test function $f(y)$.  In the
following we remove the restriction on the support, but the result
is conditional on a ratios conjecture of the type introduced by
Conrey, Farmer and Zirnbauer \cite{kn:cfz2}.  We are interested in
a conjecture for the quantity

\begin{eqnarray}
&&\sum_{n=1}^N
\frac{L(1/2+\alpha,\chi^{4n-3})}{L(1/2+\gamma,\chi^{4n-3})}.
\end{eqnarray}

We use the notation:
\begin{eqnarray}
L(s,\chi^{4n-3})&=& \prod_{p\neq 7} \left( 1-
\frac{\alpha_n(p)}{p^s}\right)^{-1} \left(
1-\frac{\overline{\alpha_n}(p)} {p^s}\right)^{-1} =
\sum_{m=1}^\infty \frac{a_n(m)}{m^s} \\
\frac{1}{L(s,\chi^{4n-3})}&=&\prod_{p\neq 7} \left( 1-
\frac{\alpha_n(p)}{p^s}\right)\left(
1-\frac{\overline{\alpha_n}(p)} {p^s}\right) = \sum_{m=1}^\infty
\frac{\mu_n(m)}{m^s},
\end{eqnarray}
so
\begin{equation}
\mu_n(p^\ell)=\left\{ \begin{array}{cc} 1&{\rm if\;\;}
\ell=0\\-a_n(p) & {\rm if
\;\;} \ell=1\\ 1 & {\rm if \;\;} \ell=2\\
0&{\rm if\;\;} \ell\geq 3\end{array}\right. .
\end{equation}
Note that $a_n(m)=\chi^{(4n-3)}(m)/m^{2n-3/2}$ in our previous notation.
We are interested in averages of the form
\begin{eqnarray}
\delta_\mu(p^m,p^\ell)&:=& \langle a_n(p^m)\mu_n(p^\ell)\rangle
=\lim_{N\to \infty} \frac 1 N \sum_{n=1}^N a_n(p^m)\mu_n(p^\ell)
\end{eqnarray}
It can be shown that
\begin{eqnarray}
\delta_\mu(p^m,p^\ell) &=&\left\{ \begin{array}{cl} 0 & {\rm
if\;\;}m {\rm \;\;odd,\;\;} \ell=0,2\\1&{\rm if\;\;} m {\rm
\;\;even,\;\;} \ell=0,2, \;p\equiv 1,2,4\mod
7\\
(-1)^{m/2} &{\rm if\;\;} m {\rm \;\;even,\;\;} \ell=0,2,\; p\equiv
3,5,6\mod 7\\
-2 & {\rm if\;\;} m {\rm
\;\;odd,\;\;} \ell=1,\; p\equiv 1,2,4\mod 7\\
0& {\rm if\;\;} m {\rm\;\;even,\;\;} \ell=1, \;p\equiv 1,2,4\mod
7\\0& {\rm if\;\;} \ell=1, \;p\equiv 3,5,6\mod 7\\ 0& {\rm if\;\;}
\ell\geq 3\\0& {\rm if\;\;} p=7\end{array}\right. .
\end{eqnarray}
 Following the ratios conjecture recipe, we take the contribution
to the $L$-function in the numerator from the first term in the
approximate functional equation:
\begin{equation}
L(s,\chi^{4n-3})=\sum_{m<x}\frac{a_n(m)}{m^s}+X_{4n-3}(s)\sum_{m<y}
\frac{a_n(m)}{m^{1-s}}.
\end{equation}
Thus we are interested in the sum
\begin{eqnarray}\label{eq:firstterma}
&& \sum_{n=1}^N \left[ \sum_m
\frac{a_n(m)}{m^{1/2+\alpha}}\;\sum_{\ell} \frac{\mu_n(\ell)}
{\ell^{1/2+\gamma}}\right]\nonumber \\
&& = \sum_{m,\ell}
\frac{1}{m^{1/2+\alpha}}\frac{1}{\ell^{1/2+\gamma}} \left(
\sum_{n=1}^N a_n(m)\mu_n(\ell)\right)\nonumber \\
&&=\prod_p \sum_{m,\ell}
\frac{\delta_\mu(p^m,p^\ell)}{p^{(1/2+\alpha)m}p^{(1/2+\gamma)\ell}}
\nonumber \\
&&=\prod_{p\equiv 1,2,4\mod 7} \left(
-\frac{2}{p^{1+\alpha+\gamma}} \left( \sum_{m=0}^\infty
\frac{1}{p^{(1+2\alpha)m}}\right) + \left(
1+\frac{1}{p^{1+2\gamma}}\right) \sum_{m=0}^\infty
\frac{1}{p^{(1+2\alpha)m}}\right)\nonumber \\
&&\qquad \times \prod_{p\equiv 3,5,6\mod 7} \left( \left(
1+\frac{1}{p^{1+2\gamma}}\right)\left( \sum_{m=0}^\infty
\frac{(-1)^m}{p^{(1+2\alpha)m}}\right)\right).
\end{eqnarray}
Now we want to extract from (\ref{eq:firstterma}) all the factors
that don't converge like $\tfrac{1}{p^2}$.  Thus, the limit as $N\to \infty$ of the expression in (\ref{eq:firstterma}) divided by $N$ equals
\begin{equation}
\frac{\zeta(1+2\gamma)\;L(1+2\alpha,\chi_{-7})}
{\zeta(1+\alpha+\gamma)\;L(1+\alpha+\gamma,\chi_{-7})} \times
A(\alpha,\gamma),
\end{equation}
with the convergent Euler product $A(\alpha,\gamma)$ given by
\begin{eqnarray}\label{eq:arithA}
&&A(\alpha,\gamma)= \prod_p
\frac{\left(1-\frac{1}{p^{1+2\gamma}}\right)}{
\left(1-\frac{1}{p^{1+\alpha+\gamma}}\right)} \nonumber \\
&&\qquad\times \prod_{p\equiv 1,2,4\mod 7} \frac{\left(
1-\frac{1}{p^{1+2\alpha}}\right)}
{\left(1-\frac{1}{p^{1+\alpha+\gamma}}\right)} \left(
-\frac{2}{p^{1+\alpha+\gamma}} \left( \sum_{m=0}^\infty
\frac{1}{p^{(1+2\alpha)m}}\right) + \left(
1+\frac{1}{p^{1+2\gamma}}\right) \sum_{m=0}^\infty
\frac{1}{p^{(1+2\alpha)m}}\right) \nonumber \\
&&\qquad \times \prod_{p\equiv 3,5,6\mod 7} \frac{\left(
1+\frac{1}{p^{1+2\alpha}}\right)}
{\left(1+\frac{1}{p^{1+\alpha+\gamma}}\right)}\left( \left(
1+\frac{1}{p^{1+2\gamma}}\right)\left( \sum_{m=0}^\infty
\frac{(-1)^m}{p^{(1+2\alpha)m}}\right)\right).
\end{eqnarray}

Using the second sum in the approximate functional equation merely
exchanges $-\alpha$ for $\alpha$ and introduces a factor
$X_{4n-3}(1/2+\alpha)$.  Thus we arrive at the conjecture, making the
usual assumptions about the error term and applying the usual
restrictions on the parameters $\alpha$ and $\gamma$,

\begin{conjecture}\label{conj:oneratio}
Let $-1/4<\Re \alpha<1/4$, $1/\log N\ll \Re \gamma<1/4$ and $\Im
\alpha,\Im \gamma\ll_\epsilon N^{1-\epsilon}$, then following the
Ratios Conjecture recipe \cite{kn:cfz2} we have
\begin{eqnarray}
&&\sum_{n=1}^N
\frac{L(1/2+\alpha,\chi^{4n-3})}{L(1/2+\gamma,\chi^{4n-3})} =
\sum_{n=1}^N \left(
\frac{\zeta(1+2\gamma)\;L(1+2\alpha,\chi_{-7})}
{\zeta(1+\alpha+\gamma)\;L(1+\alpha+\gamma,\chi_{-7})} \times
A(\alpha,\gamma)\right.\nonumber
\\&&\qquad\qquad\left.+\left(\frac{7}{2\pi}\right)^{-2\alpha}
\frac{\Gamma(2n-1-\alpha)}{\Gamma(2n-1+\alpha)}
\frac{\zeta(1+2\gamma)\;L(1-2\alpha,\chi_{-7})}
{\zeta(1-\alpha+\gamma)\;L(1-\alpha+\gamma,\chi_{-7})} \times
A(-\alpha,\gamma)\right)\nonumber \\
&&\qquad\qquad +O(N^{1/2+\epsilon}).
\end{eqnarray}
with $A(\alpha,\gamma)$ defined at (\ref{eq:arithA}).
\end{conjecture}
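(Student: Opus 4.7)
The plan is to follow the Ratios Conjecture recipe of Conrey--Farmer--Zirnbauer \cite{kn:cfz2}, largely carrying out the computation already sketched in the paragraphs preceding the statement. First, I would apply the approximate functional equation to the numerator $L(1/2+\alpha,\chi^{4n-3})$, keeping both the diagonal and the dual pieces, and simultaneously replace $1/L(1/2+\gamma,\chi^{4n-3})$ by its Dirichlet series $\sum_\ell \mu_n(\ell)/\ell^{1/2+\gamma}$. Multiplying the expansions and swapping the orders of summation reduces the problem to averages of the form $N^{-1}\sum_{n\le N} a_n(m)\mu_n(\ell)$, whose $N\to\infty$ limits define $\delta_\mu(m,\ell)$.

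Second, I would prove that $\delta_\mu$ is multiplicative. This is an exact analogue of the multiplicativity argument for $\delta$ given immediately after (\ref{eqn:delt}): using $\chi((a+b\eta))=\epsilon_{a,b}(a+b\eta)$, one needs to show that for $(N(a+b\eta),N(c+d\eta))=1$ the product $(a+b\eta)(c+d\eta)$ is real or purely imaginary only when each of $(a+b\eta)$ and $(c+d\eta)$ is separately real or purely imaginary. The case analysis on $B=0$ and $A=-B/2$ given in the first-moment proof transfers verbatim. With multiplicativity in hand, the table of values of $\delta_\mu(p^m,p^\ell)$ recorded in the excerpt reduces the double sum to an Euler product over $p$.

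Third, at each prime I would evaluate the local factor explicitly: for $p\equiv 1,2,4 \pmod 7$ the geometric series produce the displayed rational function in $p^{-(1+2\alpha)}$, $p^{-(1+2\gamma)}$ and $p^{-(1+\alpha+\gamma)}$; for $p\equiv 3,5,6 \pmod 7$ the alternating signs $(-1)^{m/2}$ yield the companion expression. Extracting the archetypal polar combination $\zeta(1+2\gamma)L(1+2\alpha,\chi_{-7})/\bigl(\zeta(1+\alpha+\gamma)L(1+\alpha+\gamma,\chi_{-7})\bigr)$ from the local factors leaves a remainder whose local factors differ from $1$ by $O(p^{-2})$; this remainder is precisely $A(\alpha,\gamma)$ as displayed in (\ref{eq:arithA}). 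The dual piece of the approximate functional equation is handled by the formal substitution $\alpha \mapsto -\alpha$ accompanied by the insertion of $X_{4n-3}(1/2+\alpha) = (7/(2\pi))^{-2\alpha}\Gamma(2n-1-\alpha)/\Gamma(2n-1+\alpha)$, producing the second line of the claimed main term.

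The main obstacle, and the reason the statement is posed as a conjecture rather than a theorem, lies in the standard ratios-recipe step of discarding off-diagonal contributions: one has to argue that the pieces of $a_n(m)\mu_n(\ell)$ not captured by $\delta_\mu(m,\ell)$ cancel on average over $n$ with power savings, uniformly in $m,\ell$ up to the length dictated by the approximate functional equation. Controlling this cancellation is well beyond what the recipe itself proves, and hence the claim is conditional with the error term $O(N^{1/2+\epsilon})$ asserted on the same footing as the analogous recipe conjectures of \cite{kn:cfz2}. A minor technical check, by contrast routine, is that $A(\alpha,\gamma)$ converges absolutely in the region $-1/4<\Re\alpha<1/4$ and $1/\log N \ll \Re\gamma<1/4$, which follows from the $O(p^{-2})$ bound on the local factors after the extraction.
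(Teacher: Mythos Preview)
Your proposal is correct and follows essentially the same route as the paper: apply the approximate functional equation to the numerator, expand $1/L$ as a Dirichlet series, replace the coefficients by their $n$-averages $\delta_\mu$, use multiplicativity to pass to an Euler product, extract the polar combination to isolate $A(\alpha,\gamma)$, and obtain the second term by the substitution $\alpha\mapsto-\alpha$ together with the $X_{4n-3}$ factor. You are also right that the statement is a conjecture precisely because the recipe discards off-diagonal terms without justification, and your remark on the absolute convergence of $A(\alpha,\gamma)$ is a point the paper leaves implicit.
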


We note that in arriving at this conjecture we have used the
ratios conjecture method exactly as was done for ratios of
quadratic twists of the $L$-function associated with Ramanujan's
tau-function, $L_\Delta(s,\chi_d)$, Conjecture 2.9 of
\cite{kn:consna06}. That conjecture and Conjecture
\ref{conj:oneratio} have exactly the same structure and in a
straightforward analogy with the lines leading up to Theorem 2.10
in \cite{kn:consna06} and Theorem 3.2 in that paper we see without
need to include further workings, that the form of the one-level
density that follows from the ratios conjecture
\ref{conj:oneratio} has the form
\begin{theorem}
Assuming Conjecture \ref{conj:oneratio} and assuming for
simplicity that $f(z)$ is even, holomorphic in the strip $|\Im
z|<2$, real on the real line and that $f(x)\ll 1/(1+x^2)$ as
$x\rightarrow \infty$,
\begin{eqnarray}
\frac{1}{N} \sum_{n=1}^N \sum_{\gamma_n} f(\gamma_n) &=&
\frac{1}{2\pi N} \int_{-\infty}^{\infty}f(t) \sum_{n=1}^N \left(
2\log \frac{7}{2\pi} +\frac{\Gamma'}{\Gamma}
(2n-1+it)+\frac{\Gamma'}{\Gamma}(2n-1-it)\right.\nonumber \\
&&+2\left( -\frac{\zeta'(1+2it)}{\zeta(1+2it)} +
\frac{L'(1+2it,\chi_{-7})}{L(1+2it,\chi_{-7})} + A'(it, it)\right.\nonumber\\
&&\left.\left.-\left(\frac{7}{2\pi}\right)^{-2it}
\frac{\Gamma(2n-1-it)}{\Gamma(2n-1+it)} \frac{\zeta(1+2it)
L(1-2it,\chi_{-7})} {L(1,\chi_{-7})} A(-it,it)\right)\right)dt\nonumber \\
&&+O(N^{-1/2+\epsilon}),
\end{eqnarray}
where $A'(r,r)$ is defined as
\begin{equation}
\frac{d}{d\alpha}A(\alpha,\gamma)\big|_{\alpha=\gamma=r}.
\end{equation}
\end{theorem}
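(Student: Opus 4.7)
The plan is to imitate the derivation of Theorem 3.2 in \cite{kn:consna06} almost verbatim, substituting our Conjecture \ref{conj:oneratio} in place of the analogous ratios conjecture there. The starting point is the identity, valid for each fixed $n$,
\begin{equation*}
\sum_{\gamma_n} f(\gamma_n) = \frac{1}{2\pi i}\left(\int_{(c)}-\int_{(1-c)}\right)
\frac{L'}{L}(s,\chi^{4n-3})\,f\!\left(\tfrac{s-1/2}{i}\right) ds,
\end{equation*}
obtained by Cauchy's theorem on a suitable rectangle, where the contour is drawn just to the right of the critical line (say $c=3/4$), and the hypotheses on $f$ guarantee absolute convergence. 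The key algebraic move is to write
\begin{equation*}
\frac{L'}{L}(1/2+\alpha,\chi^{4n-3})
= \left.\frac{d}{d\alpha}\log L(1/2+\alpha,\chi^{4n-3})\right|_{\alpha=\gamma}
= \lim_{\gamma\to\alpha}\frac{d}{d\alpha}
\log\!\left(\frac{L(1/2+\alpha,\chi^{4n-3})}{L(1/2+\gamma,\chi^{4n-3})}\right).
\end{equation*}

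Next, I would average over $n\le N$ and, pushing the $1/N\sum_n$ inside, apply Conjecture \ref{conj:oneratio} to the ratio itself before differentiating. Differentiating the conjectured formula at $\alpha=\gamma=r$ produces two kinds of contributions: from the "diagonal" term one gets the logarithmic derivatives of $\zeta(1+2r)$, of $L(1+2r,\chi_{-7})$, and of the arithmetic Euler product $A(\alpha,\gamma)$ evaluated on the diagonal, i.e.\ the term $A'(r,r)$ as defined in the statement; from the "swapped" term $(7/2\pi)^{-2\alpha}\Gamma(2n-1-\alpha)/\Gamma(2n-1+\alpha)$ times the ratio with $\alpha\mapsto -\alpha$, differentiation at $\alpha=\gamma=r$ leaves no extra derivative on the gamma factor because the gamma ratio equals $1$ at $\alpha=0$, and one is left with the "$X$-factor" piece displayed in the theorem. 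The contribution of the functional equation itself yields the $2\log(7/2\pi)+\frac{\Gamma'}{\Gamma}(2n-1+it)+\frac{\Gamma'}{\Gamma}(2n-1-it)$ summand through the usual computation of $X'_{4n-3}/X_{4n-3}$.

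Finally, setting $r=it$, folding the two contour integrals into a single integral on the critical line (using the evenness of $f$), and collecting terms gives the displayed formula. The error term $O(N^{-1/2+\epsilon})$ comes directly from the $O(N^{1/2+\epsilon})$ error in Conjecture \ref{conj:oneratio} after dividing by $N$, uniformly in the parameters in the allowed range, together with the rapid decay of $f$ which makes the $t$-integration harmless.

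The main obstacle is not really an algebraic one — the computation is mechanical once the ratios conjecture is in hand — but rather a careful bookkeeping issue: one must justify differentiating the conjectured asymptotic in $\alpha$ uniformly near $\alpha=\gamma$, verify that the implied constants are uniform across the shifts required by the contour, and check that the contour push from $\Re s = 3/4$ down to $\Re s = 1/2$ can be performed under the growth assumption $f(x)\ll 1/(1+x^2)$ together with the polynomial growth in $t$ of the gamma and zeta factors. This is the same delicate step as in \cite{kn:consna06}, and the treatment there applies verbatim; in particular no new arithmetic input is required beyond Conjecture \ref{conj:oneratio}, which is why the statement follows "without need to include further workings."
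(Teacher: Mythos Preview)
Your approach is exactly the paper's: the paper gives no independent argument and simply says the computation is a ``straightforward analogy'' with Theorems 2.10 and 3.2 of \cite{kn:consna06}, which is precisely what you propose to carry out.

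One correction to your explanation of the swapped term, though. You write that ``differentiation at $\alpha=\gamma=r$ leaves no extra derivative on the gamma factor because the gamma ratio equals $1$ at $\alpha=0$.'' That is not the reason (and the gamma ratio $\Gamma(2n-1-r)/\Gamma(2n-1+r)$ is not $1$ when $r\ne0$). The actual mechanism is that the swapped term contains the factor $1/\zeta(1-\alpha+\gamma)$, which has a \emph{simple zero} at $\alpha=\gamma$ since $\zeta(s)$ has a simple pole at $s=1$. By the product rule, every term in the $\alpha$-derivative that does not differentiate this factor vanishes upon setting $\alpha=\gamma=r$; only the term in which the derivative lands on $1/\zeta(1-\alpha+\gamma)$ survives, producing a factor $-1$ and leaving the remaining factors (including the gamma ratio and $(7/2\pi)^{-2r}$) evaluated undifferentiated at $\alpha=\gamma=r$. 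That is what yields the displayed ``$X$-factor'' piece with $\zeta(1+2it)$ in the numerator rather than the denominator.
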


If the zeros are scaled by $(1/\pi)\log N$ so as to have,
asymptotically, unit density, then we see as in \cite{kn:consna06}
that
\begin{equation}
\lim_{N\rightarrow \infty} \frac{1}{N} \sum_{n=1}^N
\sum_{\gamma_n} f(\gamma_n \log N/\pi)=\int_{-\infty}^\infty
f(y)\left(1+\frac{\sin 2\pi y}{2\pi y} \right) dy.
\end{equation}
Here there is no restriction on the support of the Fourier
transform of the test function, but the result relies on the
correct form of the main terms in Conjecture \ref{conj:oneratio}.

\section{Discretization}
Our family of $L$-functions seems to have orthogonal symmetry
type, which means that it is potentially possible to have lots of
small values or zeros of these $L$-functions at the critical
point. For modeling this family with Random Matrix Theory we need
to know how the family is ``discretized.'' In practice, the
central values of the $L$-functions of an orthogonal family can be
given by a nice conjectural formula which expresses the value in
terms of the arithmetic, or geometry of the family. For example,
if $d$ is a fundamental discriminant, and $\chi_d(n)$ the
associated quadratic Dirichlet character, then the  central values
of the $d$th quadratic twist of an elliptic curve $L$-function is
\begin{equation}L_E(1/2,\chi_d)=\kappa \frac{(c(|d|))^2}{\sqrt{|d|}}\end{equation} where
$c(|d|)$ is an integer. Thus, we know that if the $L$-function has
a value smaller than $\kappa/\sqrt{|d|}$ then it must be 0. This
is what we mean by the discretization. In this situation, it
appears that the $c(|d|)$ may take any integral values subject to
some mild conditions arising from consideration of Tamagawa
factors that lead to some powers of 2 that must divide $c(|d|)$,
depending on the primes that divide $d$.

Our situation here is different. If we try to discretize in a
similar way, we need a formula for the central values and we need
to know just  how small our values $L(1/2,\chi^{4n-3})$ can be
without being 0. In this case, Rodriguez-Villegas and Zagier \cite{kn:vilzag93} have proven a formula, conjectured by Gross and Zagier \cite{kn:grozag80},
for the central value of the $L(s,
\chi^{2n-1})$, namely
\begin{equation} L(1/2, \chi^{2n-1})=
2\frac{(2\pi/\sqrt{7})^n\Omega^{2n-1}A(n)}{(n-1)!}
\end{equation}
 where
\begin{equation}\Omega= \frac{\Gamma(1/7)\Gamma(2/7)\Gamma(4/7)}{4\pi^2}=0.81408739831\dots . \end{equation}
By the functional equation $A(n)=0$ whenever $n$ is even. For odd $n$ Gross and Zagier \cite{kn:grozag80} conjectured that $A(n)$ is a square
 and gave the following table (in the notation of Rodriguez-Villegas and Zagier):
\begin{center}
\begin{tabular}{|l|l|l|}
\hline
$n$& $A(n)$&$L(1/2,\chi^{2n-1})$\\
\hline
1& 1/4&0.9666\\
3&1&4.7890\\
5&1&0.9885\\
7&$3^2$ & 0.7346\\
9&$7^2$ & 0.1769\\
11& $(3^2\cdot 5\cdot 7)^2$& 9.8609\\
13&$(3\cdot 7\cdot 29)^2$& 0.6916\\
15 & $(3\cdot 7 \cdot 103)^2$& 0.1187\\
17 & $(3\cdot 5\cdot 7\cdot 607)^2$& 1.0642\\
19 & $(3^3\cdot7\cdot 4793)^2$ &1.7403\\
21 & $(3^2\cdot5\cdot7\cdot29\cdot2399)^2$& 6.6396\\
23& $(3^3\cdot 5\cdot 7^2\cdot 10091)^2$ & 0.3302\\
25 & $(3^2\cdot 7^2\cdot 29\cdot 61717)^2$ & 0.2072\\
27&$(3^2\cdot5^2\cdot7^2\cdot13\cdot 53^2 \cdot 79 )^2$ & 1.2823\\
29 & $(3^4\cdot5^2\cdot7^2\cdot 113\cdot 127033)^2$ & 8.4268\\
31 & $(3^5\cdot 5\cdot 7^2 \cdot 71\cdot 1690651)^2$ & 0.6039\\
33 & $(3^4\cdot 5\cdot 7^2\cdot 1291\cdot 1747169)^2$ &0.0591\\
\hline
\end{tabular}
 \end{center}
Rodriguez-Villegas and Zagier \cite{kn:vilzag93} proved that $A(n)=B(n)^2$ where $B(1)=1/2$ and $B(n)$ is an integer for $n>1$.
In fact they prove a remarkable recursion formula:

Define sequences of polynomials $a_{k}(x)$, $b_k(x)$ by the recursions
\begin{eqnarray*}
a_{k+1}(x) =\sqrt{(1+x)(1-27 x)}
\left( x\frac{d}{dx} -\frac{2k+1}{3}\right) a_k(x) -\frac{k^2}{9}
(1-5x) a_{k-1}(x)
\end{eqnarray*}
and
\begin{eqnarray*}
21 b_{k+1}(x) =\left( (32 k x -56 k+42)-(x-7) (64x-7) \frac{d}{dx}\right)
b_k(x)-2k(2k-1)(11x+7)b_{k-1}(x)
\end{eqnarray*}
with initial conditions
$a_0(x)=1$, $a_1(x)=-\frac13 \sqrt{(1-x)(1+27x)}$, $b_0(x)=1/2$,
and $b_1(x)=1$. Then, with $A$ and $B$ defind as above 
\begin{eqnarray}
A(2n+1) =\frac{a_{2n}(-1)}{4} \qquad \mbox{and} \qquad 
B(2n+1) = b_n(0).
\end{eqnarray}
Equation (6) of \cite{kn:vilzag93} states that for odd $n$ 
\begin{eqnarray}
B(n)\equiv -n \bmod 4,
\end{eqnarray}
a result that in one fell swoop proves the non-vanishing of $L(1/2,\chi^{2n-1})$
for all odd $n$.

 It would be interesting to use these recursion 
formulae to try to understand a discretization of the values of this family of L-functions, 
from which one might profitably apply a random matrix model to infer more detailed statistical behavior
of these values. The integers $B(n)$ that appear in the formula of Villegas-Zagier
are growing quickly, presumably to counteract, by virtue of the expected Lindel\"{o}f Hypothesis, the $C^n(n-1)!$ growth in the denominator.
The question of how just how small these $L$-values can be   is an interesting one.

\section{Conclusion}
With a mind toward modeling the symmetric powers of the L-function of a general elliptic curve, and to 
consider whether their central values vanish, we have 
taken a few steps toward the much simpler problem of modeling the   family of 
L-functions associated with the symmetric powers of the L-function of an elliptic curve with complex multiplication.
We have used generally applicable methods from analytic number theory
even though for our particular family there are powerful algebraic 
methods available. 
We have given some evidence that the family has orthogonal symmetry type. 
Some unresolved questions are to  (1) Try to determine if the symmetric power
$L$-functions in the non-CM case form a family. This seemed very
nebulous to us, but see the letter \cite{kn:sarnak} of Sarnak to
Mazur for some interesting calculations relevant to this issue.
(2) Try to determine a discretization for the central values of an
$L$-function associated with a weight $k$ newform. This seems to
be a whole new direction that has not been considered. In
particular, for the Gross-Zagier family we consider, explaining
the genesis of the large values of the integers $A(n)$ that appear
as factors in the central values is a challenge. (3) Try to obtain
an asymptotic formula for the second moment of our family. Our
upper bound fell just short of achieving an asymptotic formula.
(4) Prove a bound of the form $L(1/2,\chi^{4n-3})\ll n^{1/2-\lambda}$ 
for some $\lambda>0$ (i.e. a subconvexity bound) for this family.

\newcommand{\etalchar}[1]{$^{#1}$}


\end{document}